\documentclass[a4paper,11pt]{amsart}
\usepackage{amsfonts}
\usepackage{amssymb}
\usepackage[utf8]{inputenc}
\usepackage{amsmath}
\usepackage{pdflscape}
\usepackage{graphicx}
\setcounter{MaxMatrixCols}{30}
\vfuzz2pt
\hfuzz2pt
\usepackage[colorlinks=true, linkcolor=red, citecolor=blue]{hyperref}
\usepackage[]{epsfig}
\usepackage[]{pstricks}
\usepackage{tikz}

\newcommand{\R}{{\mathbb R}}

\newtheorem{theorem}{Theorem}[section]

\newtheorem{lemma}[theorem]{Lemma}

\theoremstyle{definition}

\newtheorem{remark}[theorem]{Remark}

\theoremstyle{remark}

\makeatletter
\@namedef{subjclassname@2020}{\textup{2020} Mathematics Subject Classification}
\makeatother

\newcommand{\remove}[1]{ }

\def\R{\mathbb R}
\def\be{\begin{equation}}
\def\ee{\end{equation}}
\def\ba{\begin{eqnarray}}
\def\ea{\end{eqnarray}}

\setlength{\oddsidemargin}{ 0.0 in} \setlength{\parindent}{ 24pt}
\setlength{\evensidemargin}{ 0.0 in} \setlength{\parindent}{ 24pt}
\setlength{\textheight} {9.6 in} \setlength{\textwidth}{ 6.6 in}
\setlength{\topmargin}{ -.4 in}

\numberwithin{equation}{section}

\begin{document}
\title[Stability of the Kawahara equation with infinite memory]{On the stability of the Kawahara equation with a distributed infinite memory}
\author[Capistrano--Filho]{Roberto de A. Capistrano--Filho}
\address{Departamento de Matemática, Universidade Federal de Pernambuco (UFPE), 50740-545, Recife-PE, Brazil.}
\email{roberto.capistranofilho@ufpe.br}
\author[Chentouf]{Boumedi\`ene Chentouf*}
\address{Kuwait University, Faculty of Science, Department of Mathematics, Safat 13060, Kuwait}
\email{boumediene.chentouf@ku.edu.kw}
\thanks{*Corresponding author.}
\author[de Jesus]{Isadora Maria de Jesus}
\address{Instituto de Matemática, Universidade Federal de Alagoas (UFAL), Maceió-AL and Departamento de Matemática, Universidade Federal de Pernambuco (UFPE), 50740-545, Recife-PE, Brazil}
\email{isadora.jesus@im.ufal.br; isadora.jesus@ufpe.br}
\subjclass[2020]{Primary: 35Q53, 93D20,  93D15 Secondary: 34K25}
\keywords{Kawahara  equation,  Infinite memory,  Well-posedness,  Stability,  Energy method}

\begin{abstract}
This article will deal with the stabilization problem for the higher-order dispersive system, commonly called the Kawahara equation.  To do so, we introduce a damping mechanism via a distributed memory term in the equation to prove that the solutions of the Kawahara equation are exponentially stable,  provided that specific assumptions on the memory kernel are fulfilled. This is possible thanks to the energy method that permits to provide a decay estimate of the system energy.
\end{abstract}
\maketitle

\section{Introduction}
\subsection{Model under consideration and objective}
The fifth order nonlinear dispersive equation
\begin{equation}\label{kaw}
 \pm2 \partial_tu + 3u\partial_xu - \nu \partial^3_xu +\frac{1}{45}\partial^5_xu = 0,
\end{equation}
models numerous physical phenomena.  Considering suitable assumptions on the amplitude, \linebreak wavelength, wave steepness, and so on, the properties of the asymptotic models for water waves have been extensively studied in the last years, through \eqref{kaw}, to understand the full water wave system.  For a rigorous justification of various asymptotic surface and internal waves models, we suggest the reader consult the following references \cite{ASL, BLS, Lannes}.

On the other hand, we can formulate the waves as a free boundary problem of the incompressible, irrotational Euler equation in an appropriate non-dimensional form with at least two parameters $\delta := \frac{h}{\lambda}$ and $\varepsilon := \frac{a}{h}$, non-dimensional, where the water depth, the wavelength and the amplitude of the free surface are parameterized as $h, \lambda$ and $a$, respectively.  In turn, if we introduce another non-dimensional parameter $\mu$, so-called the Bond number, which measures the importance of gravitational forces compared to surface tension forces, then the physical condition $\delta \ll 1$ characterizes the waves, which are called long waves or shallow water waves.  On the other hand, there are several long-wave approximations depending on the relations between $\varepsilon$ and $\delta$.  For instance, if we consider $\varepsilon = \delta^4 \ll 1$ and $\mu = \frac13 + \nu\varepsilon^{\frac12},$ and in connection with the  critical Bond number $\mu = \frac13$,  we have the so-called Kawahara equation, represented by \eqref{kaw}, and derived by Hasimoto and Kawahara in \cite{Hasimoto1970,Kawahara}.

The main concern of this article is to deal with the well-posedness and stability of an initial-boundary-value problem related to \eqref{kaw}. Specifically,  we are concerned with a fifth-order dispersive partial differential equation with a memory term
\begin{equation}
\label{eq1}
\begin{cases}
\partial_t u(x,t)+\partial_x^3 u(x,t)-a_0\partial_x^5 u(x,t)+u(x,t)\partial_x u(x,t)+a_1 \partial_x u(x,t)\\
\quad \quad +(-1)^k \displaystyle \int_0^\infty f(s)\partial_{x}^{2k} u(x,t-s)ds=0, &\ (x,t)\in I\times (0,\infty),\\
u(0,t)=u(L,t)=\partial_x u(0,t)=\partial_x u(L,t)=\partial_x^2 u(L,t)=0, & t>0,\\
u(x,-t)=u_0(x,t),& x\in I,\ t {\geq} 0.
\end{cases}
\end{equation}
Here $u$ represents the amplitude of the dispersive wave, $k\in\{0,1,2\}$,  $L >0$, $I =(0,L) $,  while $a_1 \in \mathbb{R}$ and $a_0 >0$ are physical parameter of the dispersive equation.  Moreover, $u_0$ is the initial condition and $f$ is the memory kernel satisfying  $f:\mathbb{R}_+:=[0,\infty) \to \mathbb{R}$ so as there exists a positive constant $c_0$ such that:
\begin{equation}\label{eq2}
f\in C^2 (\mathbb{R}_{+}),\quad f^{\prime} < 0,\quad 0\leq f^{\prime\prime}\leq -c_0 f^{\prime},\quad f(0)>0\quad\hbox{and}\quad \lim_{s\to\infty} f(s)=0.
\end{equation}

Thereafter, the energy associated with the system \eqref{eq1} is
\begin{equation}
\label{eq19}
E(t)=\dfrac{1}{2}\left(\|u(t)\|^2+\int_0^\infty g(s)\|\partial_{x}^{k}\eta^t(\cdot, s)\|^2ds\right), \ t\in \R_+.
\end{equation}
Observe that  $E'<0$ and hence the energy of our system is decreasing (see Lemma \ref{Lemma_Energy}). This means that the localized damping mechanism and the memory term constitute a damping mechanism and consequently one has to study the decay of the solutions of \eqref{eq1}. Notwithstanding, it has been noticed that the stability property of solutions of numerous physical systems may be lost when a memory effect occurs \cite{np2}.  Thus, our concern is to provide an answer to the following questions:

\vspace{0.1cm}

\textit{Does the energy $E(t)$ decay to 0 as $t$ is sufficiently large? If so, can we provide a decay rate?}

\subsection{Historical background}
Let us first present a review of the main  results available in the literature for the analysis of the Kawahara equation in a bounded interval. A pioneer work is due to Silva and Vasconcellos \cite{vasi1,vasi2}, where the authors studied the stabilization of global solutions of the linear Kawahara equation in a bounded interval under the effect of a localized damping mechanism.  The second endeavor, in this line,  was completed by Capistrano-Filho \textit{et al.} \cite{ara}, where the generalized Kawahara equation in a bounded domain $Q_T = (0, T) \times (0,L)$ considered the following system
\begin{equation}\label{int1}
\left\lbrace
\begin{array}{llr}
\partial_t u + \partial_x u +\partial^3_x u - \partial^5_x u+u^p \partial_x u +a(x)u= 0, & \mbox{in} \ Q_{T}, \\
u(t,0)=u(t,L)=  \partial_x u(t,0)=\partial_x u (t,L)=\partial^2_xu (t,L) = 0, & \mbox{on} \ [0,T],\\
 u(0,x) = u_{0}(x), & \mbox{in} \ [0,L],
\end{array}\right.
\end{equation}
with $p\in [1,4)$ and $a(x)$ is a nonnegative function and
positive only on an open subset of $(0,L)$.  It is proven that the  solutions of the above system decay exponentially.

The internal controllability problem has been tackled by Chen \cite{MoChen} for the Kawahara equation with homogeneous boundary conditions. Using Carleman estimates associated with the linear operator of the Kawahara equation with an internal observation, a null controllable result was shown when the internal control is effective in a subdomain $\omega\subset(0,L)$.  In \cite{CaGo},  considering the system \eqref{int1} with an internal control $f(t,x)$ and homogeneous boundary conditions, the equation is exactly shown to be controllable in $L^2$-weighted Sobolev spaces and, additionally,  controllable by regions in $L^2$-Sobolev space.

Recently, a new tool for the control properties of the Kawahara operator was proposed.  In \cite{CaSo}, the authors showed a new type of controllability for the Kawahara equation, what they called \textit{overdetermination control problem}. A boundary control was designed so that the solution to the problem under consideration satisfies an integral condition.

The last works on the stabilization of the Kawahara equation with a localized time-delayed interior control. In \cite{CaVi, boumediene}, under suitable assumptions on the time delay coefficients, the authors were able to prove that solutions of the Kawahara system are exponentially stable. The results were obtained using the Lyapunov approach and a compactness-uniqueness argument. More recently, the authors in \cite{CaChSoGo} gave an analysis to better understand the stabilization issue for the Kawahara equation. Indeed, it is shown that the Kawahara equation under the action of a time-delayed boundary control system remains exponentially stable under a condition on the length of the spatial domain. Such a desirable property is proved using two different approaches. It is also worth mentioning that the stability of the solutions to the Kawahara equation has been extensively studied in the context of periodic or non-periodic bounded domain \cite{dor2, dor3,hir, kat} and also in the case when the spacial variable lies in $(-\infty,\infty)$ or $[0,\infty)$ \cite{coc2, coc3, cui, dor1,isa, lar2}.

We end the literature review by mentioning that the occurrence of a memory phenomenon in the Kawahara problem \eqref{eq1} could be explained in practice by the fact that numerous compressible and incompressible fluids are intrinsically viscoelastic and therefore the influence of the past values of the amplitude of the dispersive wave of the fluid is unavoidable \cite{afg, chentouf, Dafermos, pan}.

Regarding the main contribution of this paper, we can claim that we can go one step further in the study of the stabilization problem for the fifth-order Korteweg-de-Vries type system. Compared to the recent works \cite{ara, CaChSoGo, CaVi, boumediene}, where damping mechanisms and delay controls are used, this paper closes the gap since it is the first work to treat exponential stability using only infinite memory. It is also noteworthy that the current work shows that a memory term plays a role of a damping control in the sense that it leads to the stability of the system without any additional damping such as $a(x)u$ used in \cite{ara, boumediene, vasi}  to get the stability property of the system. Finally, note that our results remain valid if $a_1=0$ and hence the drift term $\partial_x u(x,t)$ can be omitted.

\subsection{Notations and main result}
Throughout this article, $C$ denotes a constant that can be different from one step to another in the demonstrations presented here.  Let us use $\langle \ ,\ \rangle$ and $\|\cdot\|$ to denote the standard real inner product in $L^2(I)$ and its corresponding norm given by
\begin{equation*}
\langle u,v\rangle=\int_0^L u(x)v(x)dx\quad \mbox{\ and \ }\quad \|u\|=\left(\int_0^L |u(x)|^2dx\right)^\frac{1}{2}.
\end{equation*}
As in \cite{Dafermos}, we use the history approach by introducing the following variable $\eta^t$ and its initial data $\eta^0$ defined by
\begin{equation} \label{eq4} \eta^t(x,s)=\int_{t-s}^{t} u(x,\tau)d\tau \mbox{\ and \ }\eta^0(x,s)=\int_{0}^{s} u_0(x,\tau)d\tau, \ x\in I, s,t\in\R_+.
\end{equation}

Direct and formal computations show that the functional $\eta^t$ satisfies
\begin{equation}
\label{eq5}
\begin{cases}
\partial_t\eta^t(x,s)+\partial_s\eta^t(x,s)=u(x,t), & x\in I,\ s,t\in\R_+,\\
\eta^t(0,s)=\eta^t(L,s)=0,& s,t\in\R_+,\\
\eta^t(x,0)=0, &x\in I,\ t\in\R_+.\\
\end{cases}
\end{equation}
In order to express the memory term in \eqref{eq1} in terms of $\eta^t,$ pick $g:=-f'.$ Thus, according to \eqref{eq2}, we get
\begin{equation}
\label{eq6}
g\in C^1(\R_+), \ g>0,\ 0\leq -g'\leq c_0g ,\ g_0=\int_0^\infty g(s)ds=f(0)>0 \mbox{ and }\lim_{s\rightarrow\infty}g(s)=0.
\end{equation}
On the other hand,  integrating by parts with respect to $s$ and using that $\eta^t(x,0)=0$ and the limit \eqref{eq2}, we have that
\begin{equation}
\label{eq7}
{\displaystyle \int_0^\infty g(s)\partial_{x}^{2k} \eta^t(x,s)ds=\int_0^\infty f(s)\partial_{x}^{2k} u(x,t-s)ds.}
\end{equation}
Note that,  with \eqref{eq7} in hands,  we rewrite \eqref{eq1} as follows
\begin{equation}
\label{eq8}
{\displaystyle \partial_t u+\partial_x^3 u-a_0\partial_x^5 u+u\partial_x u+a_1 \partial_x u+a(x)u+(-1)^k\int_0^\infty g(s)\partial_{x}^{2k} \eta^t(x,s)ds=0.}
\end{equation}
Thereafter, we introduce a variable $U$ and its initial data $U_0$ defined by
\begin{equation}
\label{eq9} U=(u, \eta^t)^T \quad \mbox{ and }\quad U_0(x,s)=(u_0(x),\eta^0(x,s))^T
\end{equation}
with
\begin{equation}
\label{eq10}
u\in L^2(I) \mbox{\ \  and \  \ } \eta^t\in L_g:=\left\{v:\R_+\longrightarrow H_k; \int_0^\infty g(s)\|\partial_{x}^{k}v(s)\|^2ds<+\infty\right\},
\end{equation}
where the space $H_k$ is defined as
\begin{equation*}
H_k=\begin{cases}
L^2(I), \;\;\mbox{ if } k=0,\\
H_0^1(I), \;\;\mbox{ if  } k=1,\\
H_0^2(I), \;\;\mbox{ if  } k=2.
\end{cases}
\end{equation*}
Furthermore, we will consider in the set $L_g$, defined above, the inner product and norm are given by
\begin{equation}
\label{eq11}
\langle v,w\rangle_{L_g}=\int_0^\infty g(s)\langle \partial_{x}^{k}v(s),\partial_{x}^{k}w(s)\rangle ds  \mbox{ \ \ and\ \  }\|v(s)\|_{L_g}=\left(\int_0^\infty g(s)\|\partial_{x}^{k}v(s)\|^2ds\right)^\frac{1}{2},
\end{equation}
respectively and we define the energy space as
$ \mathcal{H}=L^2(I)\times L_g,$
which will be equipped with the following inner product and its corresponding norm
$$
\langle (v_1,v_2),(w_1,w_2)\rangle_{\mathcal{H}}=\langle v_1,w_1\rangle+\langle v_2,w_2\rangle_{L_g} \mbox{ \ \ and\ \  }\|(v(s),w(s))\|_{\mathcal{H}}=\left(\|v(s)\|^2+\|w(s)\|_{L_g}^2\right)^\frac{1}{2}.
$$

Additionally, to get our stability results we assume the following additional hypothesis to $g$. There exists a function $\xi:\R^+\rightarrow \R^+$ such that
\begin{equation}
\label{eq106}
\xi\in C^1(\R^+),\ \xi'\leq 0, \ \int_0^\infty \xi(s)ds=\infty \mbox{\ and \ } g'\leq -\xi g.
\end{equation}

In the sequel,  $M_P$ is the smallest positive constant satisfying the Poincar\'e's Inequality $$\|v\|^2\leq M_P \|\partial_x v\|^2,$$ for all $v\in H_0^1(I)$. Furthermore, let us denote by $M_S$ the positive constant of the Sobolev embedding
$H^1(I) \hookrightarrow L^{\infty}(I)$
$$
\Vert v \Vert_{L^{\infty}(I)}^2 \leq M_S \Vert v\Vert_{H^1(I)}^2 ,\quad v\in H^1 (I).
$$
With this in hand, we will announce the main result of this article, precisely, the stability result for the solutions of \eqref{eq1}. For that, let us reformulate our problem \eqref{eq1} and \eqref{eq6} as an abstract initial value problem, namely,
\begin{equation}
\label{eq14a}
\begin{cases}
\partial_t U(t)=\mathcal{A}U,\\
U(0)=U_0.
\end{cases}
\end{equation}
The main result of the article can be read as follows.
\begin{theorem}
\label{teo4}
Assume that $a_0>0$.  Also,  suppose that  \eqref{eq2} and \eqref{eq106} are verified.  If $U_0\in \mathcal{H}$ satisfies
\begin{equation}
\label{eq140} a_1M_P ^2+\dfrac{2}{3}M_P (M_P+1)\sqrt{L}M_S\|U_0\|<5a_0,
\end{equation}
then there exist positive constants $c$ and $\tilde{c}$ such that the solution $U$ of \eqref{eq14a} satisfies the following stability estimates
\begin{enumerate}
\item[$(i)$] If $\xi$ is a constant function, we have
\begin{equation}
\label{eq141}
E(t)\leq \tilde{c}e^{-ct}, \ t\in \R_+.
\end{equation}
 \item[$(ii)$]If $\xi$ is not a constant function, yields that
 \begin{equation}
\label{eq142}
E(t)\leq \tilde{c} e^{-c\int_0^t\xi(\tau)d\tau}\left( 1+\int_0^t e^{c\int_0^\sigma\xi(\tau)d\tau}\xi(\sigma)\int_\sigma^\infty g(s)h(\sigma,s)ds d\sigma\right), t\in \R_+ ,
\end{equation}
where $$h(t,s)=t^2+t+\left\|\int^{t-s}_0 \partial_x^{k}u_0(\cdot,\tau)d\tau\right\|,$$
for  $0\leq t\leq s$.
\end{enumerate}
\end{theorem}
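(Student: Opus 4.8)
The plan is to run an energy--multiplier argument: build a Lyapunov functional equivalent to $E$, estimate its derivative, and conclude by a (possibly generalized) Gronwall inequality. All computations are first carried out on strong solutions $U\in\mathcal{D}(\mathcal{A})$, where the integrations by parts below are licit, and then transferred to arbitrary $U_0\in\mathcal{H}$ by density together with the well-posedness of \eqref{eq14a}. Two facts are used repeatedly: the dissipation identity of Lemma~\ref{Lemma_Energy}, $E'(t)=-\tfrac{a_0}{2}|\partial_x^2u(0,t)|^2+\tfrac12\int_0^\infty g'(s)\|\partial_x^k\eta^t(\cdot,s)\|^2\,ds\le 0$, and its consequence $\|u(t)\|^2\le 2E(t)\le 2E(0)=\|U_0\|_{\mathcal{H}}^2$.

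The main auxiliary object is $\Phi(t)=\int_0^L x\,u(x,t)^2\,dx$, which satisfies $0\le\Phi(t)\le 2LE(t)$. Differentiating $\Phi$ along \eqref{eq8} (in which $a\equiv 0$) and integrating by parts, using $u(0)=u(L)=\partial_xu(0)=\partial_xu(L)=\partial_x^2u(L)=0$, I expect the identity
\[
\Phi'(t)=-5a_0\|\partial_x^2u\|^2-3\|\partial_xu\|^2+a_1\|u\|^2+\tfrac23\!\int_0^L\! u^3\,dx-2(-1)^k\!\int_0^L\! x\,u\!\int_0^\infty\! g(s)\partial_x^{2k}\eta^t\,ds\,dx .
\]
The cubic term is controlled by the Sobolev embedding and Poincar\'e's inequality, $\bigl|\int_0^L u^3\bigr|\le M_S(M_P+1)\sqrt L\,\|u\|\,\|\partial_xu\|^2\le M_S(M_P+1)\sqrt L\,\|U_0\|_{\mathcal{H}}\,\|\partial_xu\|^2$, and then $\|\partial_xu\|^2\le M_P\|\partial_x^2u\|^2$; likewise $a_1\|u\|^2\le a_1M_P^2\|\partial_x^2u\|^2$. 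In the memory integral one integrates by parts $k$ times in $x$ (the boundary terms vanishing by the conditions on $u$ and $\eta^t$) and applies Cauchy--Schwarz with $\int_0^\infty g=g_0$, bounding it, for every $\lambda>0$, by $\lambda\|\eta^t\|_{L_g}^2+\tfrac{Cg_0}{\lambda}\|\partial_x^2u\|^2$, the lower-order derivatives of $xu$ being absorbed into $\|\partial_x^2u\|^2$ via Poincar\'e. Since \eqref{eq140} means $\delta_0:=5a_0-a_1M_P^2-\tfrac23M_P(M_P+1)\sqrt LM_S\|U_0\|_{\mathcal{H}}>0$, choosing $\lambda$ with $Cg_0/\lambda<\delta_0/2$ gives $\Phi'(t)\le-\tfrac{\delta_0}{2}\|\partial_x^2u\|^2-3\|\partial_xu\|^2+\lambda\|\eta^t\|_{L_g}^2$; integrating this also yields $\int_0^t\|\partial_x^2u(\tau)\|^2\,d\tau\le C(1+t)$.

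Now put $\mathcal{L}(t)=NE(t)+\Phi(t)$ with $N$ large; since $NE\le\mathcal{L}\le(N+2L)E$, $\mathcal{L}$ is equivalent to $E$. If $\xi\equiv\xi_0>0$, then $g'\le-\xi_0 g$ gives $E'(t)\le-\tfrac{\xi_0}{2}\|\eta^t\|_{L_g}^2$, and for $N$ with $N\xi_0/2>\lambda$, together with $\|u\|^2\le M_P\|\partial_xu\|^2$, we get $\mathcal{L}'(t)\le-c_0E(t)\le-\tfrac{c_0}{N+2L}\mathcal{L}(t)$; Gronwall and the equivalence of $\mathcal{L}$ and $E$ give \eqref{eq141}. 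If $\xi$ is not constant, $\xi$ cannot be pulled out of $\int_0^\infty\xi(s)g(s)\|\partial_x^k\eta^t(\cdot,s)\|^2ds$; splitting this integral at $s=t$ and using that $\xi$ is non-increasing (and bounded, by \eqref{eq6} and \eqref{eq106}), the same inputs lead to a differential inequality of the form
\[
\mathcal{L}'(t)\le-c\,\xi(t)\,\mathcal{L}(t)+c\,\xi(t)\int_t^\infty g(s)\,\|\partial_x^k\eta^t(\cdot,s)\|^2\,ds .
\]
For $s\ge t$, the definition \eqref{eq4} together with $u(\cdot,-\tau)=u_0(\cdot,\tau)$ gives $\eta^t(x,s)=\int_0^{s-t}u_0(x,\tau)\,d\tau+\int_0^t u(x,\tau)\,d\tau$; using the a priori bounds $\|u(\tau)\|\le\|U_0\|_{\mathcal{H}}$ and $\int_0^t\|\partial_x^2u\|^2d\tau\le C(1+t)$ from the previous step, this leads to $\int_t^\infty g(s)\|\partial_x^k\eta^t(\cdot,s)\|^2\,ds\lesssim\int_t^\infty g(s)h(t,s)\,ds$. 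Inserting this and integrating the last inequality by the variation-of-constants formula yields \eqref{eq142}.

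I expect the delicate point to be the non-constant-$\xi$ step: making the tail estimate for $s>t$ precise and uniform in $k\in\{0,1,2\}$ (in particular controlling $\partial_x^k$ of the initial history $\int_0^{s-t}u_0\,d\tau$ and of $\int_0^t u\,d\tau$ by already-available quantities), and then running the variation-of-constants argument with the time-dependent rate $\xi(t)$ so that the bookkeeping reproduces \eqref{eq142} exactly. A secondary nuisance is tracking every boundary term in the repeated integrations by parts, both for the dispersive part and, for each admissible $k$, for the memory term; the threshold \eqref{eq140}, with its precise constants $5a_0$, $a_1M_P^2$ and $\tfrac23M_P(M_P+1)\sqrt LM_S$, is exactly what makes the net coefficient of $\|\partial_x^2u\|^2$ negative once all these contributions are assembled.
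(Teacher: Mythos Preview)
Your overall strategy---the $xu$ multiplier, the Lyapunov combination of $E$ and $\Phi(t)=\int_0^L xu^2\,dx$, Gr\"onwall---matches the paper, and your treatment of part~(i) is essentially the paper's proof. The identity for $\Phi'$, the cubic bound via Sobolev and Poincar\'e, the $\epsilon$--Young handling of the memory term, and the emergence of the threshold \eqref{eq140} are all correct and are the paper's ingredients.

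There is, however, a genuine gap in part~(ii). With your choice $\mathcal{L}=NE+\Phi$, the derivative satisfies
\[
\mathcal{L}'(t)\le NE'(t)-\tfrac{\delta_0}{2}\|\partial_x^2u\|^2+\lambda\,\|\eta^t\|_{L_g}^2,
\]
and the bad term $\lambda\|\eta^t\|_{L_g}^2$ carries \emph{no} factor $\xi(t)$. The only tool you have to absorb it is $NE'(t)\le -\tfrac{N}{2}\int_0^\infty\xi(s)g(s)\|\partial_x^k\eta^t\|^2\,ds$; splitting at $s=t$ gives at best $-\tfrac{N}{2}\xi(t)\int_0^t g\|\partial_x^k\eta^t\|^2$, so on $[0,t]$ you are left with $\bigl(\lambda-\tfrac{N}{2}\xi(t)\bigr)\int_0^t g\|\partial_x^k\eta^t\|^2$. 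When $\xi(t)\to 0$ this cannot be made nonpositive for any fixed $N$, and the boundedness of $\xi$ from above (which is all that \eqref{eq6} and \eqref{eq106} give) does not help. Hence the claimed inequality $\mathcal{L}'\le-c\,\xi(t)\mathcal{L}+c\,\xi(t)\int_t^\infty g\|\partial_x^k\eta^t\|^2\,ds$ does not follow from ``the same inputs''.

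The fix---and this is precisely what the paper does---is to build the time weight into the Lyapunov functional: set $F(t)=\mu E(t)+C_1\xi(t)\Phi(t)$. Then $\xi'\le 0$ and $\Phi\ge 0$ give $F'(t)\le \mu E'(t)+C_1\xi(t)\Phi'(t)$, so every term coming from the multiplier now carries a factor $\xi(t)$. In particular the troublesome memory contribution becomes $C_2\,\xi(t)\|\eta^t\|_{L_g}^2$, which, after the split at $s=t$, is controlled on $[0,t]$ by $-2E'(t)$ with a fixed constant (since $\xi(t)\le\xi(s)$ there), and the coefficient $\mu=2(C_2+M_P^{-2})$ is chosen so that $\mu E'(t)$ exactly absorbs it. With this modification the rest of your plan (the tail bound via \eqref{eq4}, the a priori estimate $\int_0^t\|\partial_x^2u\|^2\le C(1+t)$, and variation of constants) goes through and yields \eqref{eq142}.
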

\begin{remark}
Functions $g$ satisfying \eqref{eq6} and \eqref{eq106} are very wide and contain, for example, the ones which converge to zero exponentially like
$$g(s)=d_1e^{-q_1s},$$
where  $\xi(s)=q_1=\xi_0$ with $d_1>0$ and $q_1>0$,  or polynomial like
$$ g(s)=d_1(1+s)^{-q_1},$$
where  $\xi(s)=\dfrac{q_1}{s+1}$,  $\xi_0=q_1$ with $d_1>0$ and $q_1>1$, or between them like the following one
$$ g(s)=d_1e^{-q_1(s+1)^{p_1}},$$
where $\xi(s)=q_1p_1(s+1)^{p_1-1}$,  $\xi_0=q_1p_1,$ with $d_1>0$,  $q_1>0$ and $ p_1\in(0,1)$.
\end{remark}

Our work is outlined as follows: Section \ref{Sec2} is devoted to presenting preliminary results which are essential for the rest of the article. In Section \ref{Sec3} we prove the well-posedness of the damping-memory problem \eqref{eq1}. After that,  the main result of the article, namely, Theorem \ref{teo4} is shown in Section \ref{Sec4}.

\section{Preliminaries}\label{Sec2}
As mentioned before,  \eqref{eq1} and \eqref{eq6} can be seen as the following Cauchy problem:
\begin{equation}
\label{eq14}
\begin{cases}
\partial_t U(t)=\mathcal{A}U\\
U(0)=U_0,
\end{cases}
\end{equation}
with the operator $\mathcal{A}$ given by
\begin{equation*}
\mathcal{A}(U)=\left(
\begin{array}{c}
-\partial_x^3 u+a_0\partial_x^5 u-u\partial_x u-a_1 \partial_x u-(-1)^k \displaystyle \int_0^\infty g(s)\partial_{x}^{2k}\eta^t(\cdot,s)ds\\
u-\partial_s \eta^t
\end{array}\right).
\end{equation*}
Here, let us consider the domain of $\mathcal{A}$ as follows
$$D(\mathcal{A})=\{U\in \mathcal{H}; \mathcal{A}(U)\in\mathcal{H}, u\in H^2_0(I), \partial_x^2 u(L)=0, \eta^t(x,0)=0 \}.$$
Additionally, for $T>0,$ we introduce the space
$$
\mathcal{B}=C([0,T];L^2(I))\cap L^2(0,T; H^2(I))
$$
whose considered norm is
$$
\|\cdot\|_{\mathcal{B}}=\|\cdot\|_{C([0,T];L^2(I))}+\|\cdot\|_{L^2(0,T; H^2(I))}.
$$
The next lemma gives us a formal calculation of the derivative of $E(t)=\dfrac{1}{2}\|U(t)\|^2_{\mathcal{H}}$,  defined by \eqref{eq19}, which will be important in the work (the computations will be rigorously  justified later).

\begin{lemma}\label{Lemma_Energy} Let us consider $\ I=(0,L)$ and $a_0>0.$ Assume that \eqref{eq2} hold, then the derivative in time of the energy functional $E$ satisfies
\begin{equation}
\label{eq20}
E'(t)=-\dfrac{1}{2}a_0\left[(\partial^2_x u)(0)\right]^2+\dfrac{1}{2}\int_0^\infty g'(s)\|\partial_{x}^{k}\eta^t\|^2ds.
\end{equation}
\end{lemma}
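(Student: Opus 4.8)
The plan is to differentiate $E(t)=\frac12\|U(t)\|_{\mathcal H}^2=\frac12\|u(t)\|^2+\frac12\int_0^\infty g(s)\|\partial_x^k\eta^t(\cdot,s)\|^2\,ds$ piece by piece, insert the two evolution equations --- the first component of $\mathcal AU$ from \eqref{eq14} for $\partial_t u$, and the transport identity \eqref{eq5} for $\partial_t\eta^t$ --- and integrate by parts, in $x$ for the first piece and in $s$ for the second, so that the cross terms produced by the memory coupling cancel each other and only the two contributions displayed in \eqref{eq20} remain.

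For the first piece, $\frac{d}{dt}\tfrac12\|u\|^2=\langle u,\partial_t u\rangle$. Substituting $\partial_t u=-\partial_x^3u+a_0\partial_x^5u-u\partial_xu-a_1\partial_xu-(-1)^k\int_0^\infty g(s)\partial_x^{2k}\eta^t(\cdot,s)\,ds$ and integrating by parts repeatedly in $x$, every boundary contribution is killed by the boundary conditions $u(0,t)=u(L,t)=\partial_xu(0,t)=\partial_xu(L,t)=\partial_x^2u(L,t)=0$: the terms $\langle u,\partial_x^3u\rangle$, $\langle u,u\partial_xu\rangle$ and $a_1\langle u,\partial_xu\rangle$ all reduce to vanishing boundary terms, while $a_0\langle u,\partial_x^5u\rangle$ leaves exactly $-\tfrac12 a_0[(\partial_x^2u)(0)]^2$ after five integrations by parts. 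For the memory term, $2k$ integrations by parts in $x$ --- whose boundary terms again vanish, uniformly for $k\in\{0,1,2\}$, since $\eta^t(0,s)=\eta^t(L,s)=0$ and $u,\partial_x u$ vanish at both endpoints --- give $-(-1)^k\int_0^\infty g(s)\langle u,\partial_x^{2k}\eta^t\rangle\,ds=-\int_0^\infty g(s)\langle\partial_x^ku,\partial_x^k\eta^t(\cdot,s)\rangle\,ds$. Hence $\frac{d}{dt}\tfrac12\|u\|^2=-\tfrac12 a_0[(\partial_x^2u)(0)]^2-\int_0^\infty g(s)\langle\partial_x^ku,\partial_x^k\eta^t\rangle\,ds$.

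For the second piece, $\frac{d}{dt}\tfrac12\int_0^\infty g(s)\|\partial_x^k\eta^t\|^2\,ds=\int_0^\infty g(s)\langle\partial_x^k\eta^t,\partial_x^k\partial_t\eta^t\rangle\,ds$, and by \eqref{eq5} one has $\partial_x^k\partial_t\eta^t=\partial_x^ku-\partial_s\partial_x^k\eta^t$. The $u$-part produces $\int_0^\infty g(s)\langle\partial_x^k\eta^t,\partial_x^ku\rangle\,ds$, which cancels precisely the cross term from the previous step; the remaining part equals $-\tfrac12\int_0^\infty g(s)\,\partial_s\|\partial_x^k\eta^t\|^2\,ds$, and integrating by parts in $s$ --- the boundary value at $s=0$ vanishing because $\eta^t(x,0)=0$, and at $s=\infty$ because $g(s)\to0$ --- yields $\tfrac12\int_0^\infty g'(s)\|\partial_x^k\eta^t\|^2\,ds$. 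Adding the two pieces gives exactly \eqref{eq20}.

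The delicate point is not the bookkeeping but its justification: one must know that $u$, and hence $\eta^t$, is regular enough for all these integrations by parts to be legitimate, that differentiation under the $\int_0^\infty\!ds$ sign is permitted, and --- most notably --- that the boundary term $g(s)\|\partial_x^k\eta^t(\cdot,s)\|^2$ genuinely tends to $0$ as $s\to\infty$. As flagged in the statement, I would carry out the computation here only at the formal level; its rigorous justification will follow once well-posedness and the regularity of solutions issued from $D(\mathcal A)$ have been established in the next section.
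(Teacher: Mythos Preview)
Your proposal is correct and follows essentially the same route as the paper: differentiate the two pieces of $E$, use the equation for $\partial_t u$ and the transport identity \eqref{eq5} for $\partial_t\eta^t$, integrate by parts in $x$ and in $s$, and observe the cancellation of the memory cross term. One small slip: moving from $\langle u,\partial_x^{2k}\eta^t\rangle$ to $\langle\partial_x^k u,\partial_x^k\eta^t\rangle$ requires $k$ integrations by parts, not $2k$; the boundary terms vanish because $u$ and $\partial_x u$ are zero at $0$ and $L$, exactly as you say, and the final identity you write is correct.
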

\begin{proof}
Observe that it  follows from \eqref{eq2} that
\begin{equation}\label{eq21}
E^{\prime}(t)=\left\langle\partial_{t} u, u\right\rangle+\frac{1}{2} \partial_{t}\left(\int_{0}^{\infty} g(s)\left\|\partial_{x}^{k} \eta^{t}(\cdot, s)\right\|^{2} d s\right) .
\end{equation}
We will analyze each part of the $E^{\prime}(t)$ separately.  First, note that by multiplying \eqref{eq8} by $u$, integrating by parts in and using the boundary condition of \eqref{eq1}, we have
\begin{equation}\label{eq22}
\int_{0}^{L} u \partial_{t} u d x=-a_{0} \frac{1}{2}\left(\partial_{x}^{2} u(0)\right)^{2}-(-1)^{k} \int_{0}^{L} u \int_{0}^{\infty} g(s) \partial_{x}^{2 k} \eta^{t}(x, s) d s d x.
\end{equation}
Now, multiplying \eqref{eq5} by $(-1)^{k} \partial_{x}^{2 k} g(s) \eta^{t}$ and again, integrating by parts in $I \times \mathbb{R}_{+}$,  we get
\begin{equation}\label{eq23}
\begin{split}
\int_{0}^{L} \int_{0}^{\infty}(-1)^{k} g(s) \partial_{x}^{2 k} \eta^{t} \partial_{t} \eta^{t} d s d x&+\int_{0}^{L} \int_{0}^{\infty}(-1)^{k} g(s) \partial_{x}^{2 k} \eta^{t} \partial_{s} \eta^{t} d s d x\\&=\int_{0}^{L} \int_{0}^{\infty} (-1)^{k}u g(s) \partial_{x}^{2 k} \eta^{t} d s d x
\end{split}
\end{equation}
thanks to the boundary conditions of \eqref{eq5}.  When $k=0$,  \eqref{eq20} holds directly from \eqref{eq21}, \eqref{eq22} and \eqref{eq23}.  For the case $k=1$ or $k=2$, note that once again integrating by parts $k-$times with respect to the variable $x$, in the two left-hand parcels of \eqref{eq23} and, only one time with respect to the variable $s$ in the second left-hand parcel of \eqref{eq23},  we find that
\begin{equation}\label{eq24}
\begin{split}
\frac{1}{2} \partial_{t}\left(\int_{0}^{\infty} g(s)\left\|\partial_{x}^{k} \eta^{t}\right\|^{2} d s\right)=\frac{1}{2} \int_{0}^{\infty} g^{\prime}(s)\left\|\partial_{x}^{k} \eta^{t}\right\|^{2} d s+(-1)^{k} \int_{0}^{L} u \int_{0}^{\infty} g(s) \partial_{x}^{2 k} \eta^{t} d s d x,
\end{split}
\end{equation}
since we have that $\eta^{t}(x, 0)=0$ and that the limit \eqref{eq6} holds.  Hence, in this case, to get \eqref{eq20} just add \eqref{eq22} and \eqref{eq24}.
\end{proof}
\begin{remark}\label{rmk1}
 Let us give some comments.
\begin{itemize}
\item[(i)] Since $a_0>0$ and due to the assumptions on $g'$ (see \eqref{eq6}), it follows from \eqref{eq20} that $E'(t)\leq 0$.
Hence the memory is acting as a mechanism of damping feedback.
\item[(ii)] Note that the integral term of \eqref{eq20} is well-defined.  In fact,  observe that since $0\leq -g'\leq c_0g$, we have
\begin{equation}
\label{eq25}
 \left|\int_0^\infty g'(s)\|\partial_{x}^{k}\eta^t\|^2ds\right|=-\int_0^\infty g'(s)\|\partial_{x}^{k}\eta^t\|^2ds\leq c_0\int_0^\infty g(s)\|\partial_{x}^{k}\eta^t\|^2ds=c_0\|\partial_{x}^{k}\eta^t\|_{L_g}^2<\infty,
\end{equation}
for any $\eta^t\in L_g$, showing our claim.
\end{itemize}
\end{remark}

\section{Well-posedness of the memory problem}\label{Sec3}
In this section, we will study the well-posedness of the system \eqref{eq1}. Precisely,  we will initially study the well-posedness of the linearized system associated with \eqref{eq1}, and then, we will show that the system with source term is well-posed and, finally,  we prove that the original nonlinear system \eqref{eq1} is well-posed.
\subsection{Well-posedness: The linearized problem}
In this subsection, we give the details about the well-posedness of the linearized system associated with \eqref{eq1},  namely
\begin{equation}
\label{eq26}
\begin{cases}
\partial_t u+\partial_x^3 u-a_0\partial_x^5 u+a_1 \partial_x u+(-1)^k \displaystyle \int_0^\infty g(s)\partial_{x}^{2k}\eta^t(x,s)ds=0,& \ (x,t)\in I\times (0,\infty),\\
\partial_t\eta^t(x,s)+\partial_s\eta^t(x,s)-u(x,t)=0,&  x\in I,\ s,t\in\R_+,\\
\eta^t(0,s)=\eta^t(L,s)=\eta^t(x,0)=0,& x\in I,\ s,t\in\R_+,\\
u(0,t)=u(L,t)=\partial_x u(0,t)=\partial_x u(L,t)=\partial_x^2 u(L,t)=0, & t>0,\\
u(x,0)=u_0(x),& x\in I,
\end{cases}
\end{equation}
with some initial data $(u_0,\eta^0).$ Note that the system \eqref{eq26} can be written in an abstract form in $\mathcal{H}$ as follows
\begin{equation}
\label{eq27}
\begin{cases}
\partial_t \Phi(t)=A\Phi(t),\ t>0\\
\Phi(0)=\Phi_0,
\end{cases}
\end{equation}
with $\Phi=(u,\eta^t), \ \Phi_0=(u_0,\eta^0)$ and $A$ is a linear operator giving by
\begin{equation}
\label{eq28}
A(\Phi)=\left(\begin{array}{c}
-\partial_x^3 u+a_0\partial_x^5 u-a_1 \partial_x u-(-1)^k \displaystyle \int_0^\infty g(s)\partial_{x}^{2k}\eta^t(x,s)ds\\
u-\partial_s\eta^t
\end{array}\right)
\end{equation}
with domain
\begin{equation}
\label{eq29}D(A)=\{\Phi\in \mathcal{H}; A(\Phi)\in\mathcal{H}, u\in H^2_0(I), \partial_x^2 u(L)=0, \eta^t(x,0)=0 \}.
\end{equation}

In turn, recall that, in this section, the generic positive constant $C$ is independent of the initial data $\Phi_{0}$ but may depend on $T, g_{0}$ and the system's parameters $a_{i}(i=0,1)$. The following result ensures the well-posedness of the linearized system.

\begin{theorem}
\label{teo1} Let us consider $\ I=(0,L), \ a_1\in\mathbb{R}$ and $\ a_0>0.$ Suppose that \eqref{eq2} is verified, then the following assertions can be verified:
\begin{itemize}
\item[(i)] The linear operator $A$ defined by \eqref{eq28} generates a $C_0$-semigroup of contractions $S(t)$.  \linebreak Moreover,  given an initial data $\Phi_0\in D(A)$,  the problem \eqref{eq27} admits a unique classical solution
\begin{equation}
\label{eq33}
\Phi\in C(\R_+;D(A))\cup C^1(\R_+,\mathcal{H}).
\end{equation}
In turn, if $\Phi_0\in \mathcal{H},$ then \eqref{eq27} have a unique mild solution
\begin{equation}
\label{eq34}
\Phi\in C(\R_+;\mathcal{H}).
\end{equation}
\item[$(ii)$] For any $\Phi_0\in \mathcal{H}$ and $T>0,$  the following estimates holds
\begin{equation}
\label{eq72}
\|u\|_{L^2(0,T;H^2(0,L))}^2\leq C\|(u_0,\eta^0)\|_{\mathcal{H}}^2,
\end{equation}
for some positive constant $C$. Additionally,  the mapping
\begin{equation}
\label{eq73} \Delta: \Phi_0=(u_0,\eta^0)^T\in \mathcal{H}\rightarrow \Phi(\cdot):=S(\cdot)\Phi_0\in \mathcal{B}\times C([0,T];L_g)
\end{equation}
is continuous.
\end{itemize}
\end{theorem}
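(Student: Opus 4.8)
The plan is to establish part (i) via the Lumer--Phillips theorem, following the classical semigroup scheme for Korteweg--de Vries type equations adapted to the memory variable, and then to derive the hidden-regularity bound \eqref{eq72} by a Kato-type multiplier argument, from which the continuity of $\Delta$ follows by linearity.

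\emph{Density and dissipativity of $A$.} First I would note that $D(A)$ is dense in $\mathcal{H}$, since it contains every pair $(\varphi,\psi)$ with $\varphi\in C_c^\infty(I)$ and $\psi$ smooth, $H_k$-valued and compactly supported in $s$ away from $s=0$. Next I would compute $\langle A\Phi,\Phi\rangle_{\mathcal{H}}$ for $\Phi=(u,\eta^t)\in D(A)$ exactly as in the proof of Lemma~\ref{Lemma_Energy}: the contributions of $-\partial_x^3u$ and $-a_1\partial_x u$ tested against $u$ vanish because $u(0)=u(L)=\partial_x u(0)=\partial_x u(L)=0$; integrating $a_0\partial_x^5u$ against $u$ by parts five times gives $-\tfrac{a_0}{2}(\partial_x^2u(0))^2$, using $\partial_x^2u(L)=0$; the two memory cross-terms cancel after $k$ integrations by parts in $x$; and $-\langle\partial_s\eta^t,\eta^t\rangle_{L_g}=\tfrac12\int_0^\infty g'(s)\|\partial_x^k\eta^t(\cdot,s)\|^2\,ds$ since $\eta^t(\cdot,0)=0$ and $g(\infty)=0$. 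This yields
\begin{equation*}
\langle A\Phi,\Phi\rangle_{\mathcal{H}}=-\frac{a_0}{2}\big(\partial_x^2u(0)\big)^2+\frac12\int_0^\infty g'(s)\,\|\partial_x^k\eta^t(\cdot,s)\|^2\,ds\le 0
\end{equation*}
by $a_0>0$ and $g'\le 0$ (see \eqref{eq6}), so $A$ is dissipative.

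\emph{Maximality.} I would then show $I-A$ is onto. Given $F=(f_1,f_2)\in\mathcal{H}$, the $\eta$-equation of $(I-A)\Phi=F$, namely $\eta^t+\partial_s\eta^t=u+f_2$ with $\eta^t(\cdot,0)=0$, is a transport ODE in $s$ solved explicitly by $\eta^t(x,s)=(1-e^{-s})u(x)+\int_0^s e^{-(s-\tau)}f_2(x,\tau)\,d\tau$, and Minkowski's inequality in $s$ together with $g$ non-increasing and $g_0<\infty$ shows $\eta^t\in L_g$ as soon as $u\in H_k$. Substituting this back, the resolvent equation reduces to the stationary fifth-order boundary value problem
\begin{equation*}
u+\partial_x^3u-a_0\partial_x^5u+a_1\partial_x u+(-1)^k\beta\,\partial_x^{2k}u=\widetilde f,\qquad \beta:=\int_0^\infty g(s)(1-e^{-s})\,ds\in(0,g_0),
\end{equation*}
subject to $u(0)=u(L)=\partial_x u(0)=\partial_x u(L)=\partial_x^2u(L)=0$, with $\widetilde f$ built from $F$. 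I would solve this by the standard variational/Galerkin procedure for Kawahara-type operators: the homogeneous problem has only the trivial solution, because testing with $u$ gives $\|u\|^2+\tfrac{a_0}{2}(\partial_x^2u(0))^2+\beta\|\partial_x^k u\|^2=0$, and the same coercivity-type identity furnishes the a~priori estimate needed to pass to the limit in the approximation (equivalently, it gives the compatibility condition allowing one to invoke the Fredholm alternative for this constant-coefficient fifth-order ODE). Reconstructing $\eta^t$ from the solution $u$ then produces $\Phi\in D(A)$ with $(I-A)\Phi=F$. The Lumer--Phillips theorem then gives that $A$ generates a $C_0$-semigroup of contractions $S(t)$, hence the solutions \eqref{eq33}--\eqref{eq34}; in particular $\|S(t)\Phi_0\|_{\mathcal{H}}\le\|\Phi_0\|_{\mathcal{H}}$, i.e. $2E(t)\le\|\Phi_0\|_{\mathcal{H}}^2$.

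\emph{Hidden regularity and continuity of $\Delta$.} For \eqref{eq72} I would multiply the first equation of \eqref{eq26} by $xu$ and integrate over $I\times(0,T)$. Integrating by parts in $x$ produces, as leading term, a positive multiple of $\|u\|_{L^2(0,T;H^2(I))}^2$ (plus lower-order bulk terms and boundary contributions at $x=L$ that vanish by the boundary conditions), the time derivative yields $\tfrac12\big[\int_0^L x\,u^2\,dx\big]_0^T\le L(\|u(T)\|^2+\|u_0\|^2)$, the boundary term at $x=0$ equals $\int_0^T(\partial_x^2u(0,t))^2\,dt$ and is bounded by $C\|\Phi_0\|_{\mathcal{H}}^2$ after integrating the dissipation identity in time, and the memory term is controlled, after $k$ integrations by parts in $x$ and Minkowski/Cauchy--Schwarz in $s$, by $\delta\|u\|_{L^2(0,T;H^2)}^2+C_\delta g_0 T\sup_{t}\|\eta^t\|_{L_g}^2$; taking $\delta$ small and absorbing gives \eqref{eq72}. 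Finally, by the contraction property $S(\cdot)\Phi_0\in C([0,T];\mathcal{H})$ with norm $\le\|\Phi_0\|_{\mathcal{H}}$, while \eqref{eq72} puts the first component in $L^2(0,T;H^2(I))$ with norm $\le C\|\Phi_0\|_{\mathcal{H}}$; since $\Delta$ is linear, it is therefore bounded from $\mathcal{H}$ into $\mathcal{B}\times C([0,T];L_g)$, hence continuous.

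\emph{Expected main obstacle.} The hard part will be the surjectivity of $I-A$: setting the nonlocal, non-symmetric fifth-order stationary problem in the right functional framework and justifying its solvability, the memory term $(-1)^k\beta\,\partial_x^{2k}u$ being harmless (favourable for $k=1,2$, a bounded perturbation for $k=0$). The multiplier computation for \eqref{eq72} is routine but requires careful bookkeeping of the numerous boundary terms arising from testing the fifth-order operator against $xu$.
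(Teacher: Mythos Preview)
Your proposal is correct; the main difference from the paper lies in how part~(i) is established. The paper avoids the resolvent equation entirely: after showing $A$ is dissipative (exactly as you do), it writes down the adjoint $A^*$ explicitly,
\[
A^*\Psi=\begin{pmatrix}\partial_x^3 v-a_0\partial_x^5 v+a_1\partial_x v+(-1)^k\int_0^\infty g(s)\partial_x^{2k}\zeta^t(\cdot,s)\,ds\\[2pt] -v+\dfrac{g'(s)}{g(s)}\zeta^t+\partial_s\zeta^t\end{pmatrix},
\]
with $D(A^*)$ obtained by swapping the boundary condition $\partial_x^2u(L)=0$ for $\partial_x^2v(0)=0$, and checks that $A^*$ is dissipative as well. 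Since $A$ is densely defined and closed, the conclusion follows from the corollary of Lumer--Phillips that uses dissipativity of both $A$ and $A^*$. Your route---solving the transport ODE for $\eta^t$ and reducing to a fifth-order stationary problem, then invoking uniqueness plus Fredholm---is more constructive and gives explicit information about the resolvent, but is heavier; the paper's adjoint shortcut trades this for a second integration-by-parts computation that is almost identical to the first.

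For part~(ii) the two arguments coincide (multiplier $xu$, then case splitting on $k$ for the memory term). One small correction: with the weight $x$, no boundary contribution of the form $\int_0^T(\partial_x^2u(0,t))^2\,dt$ actually appears---every boundary term at $x=0$ carries a factor $x|_{x=0}=0$, and the only surviving boundary quantity $\tfrac{a_0L}{2}(\partial_x^2u(L,t))^2$ vanishes by hypothesis---so your appeal to the integrated dissipation identity there is unnecessary, though harmless.
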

\begin{proof} In order to show (i), consider $\Phi=(u,\eta^t)\in D(A).$ Thanks to \eqref{eq20} and \eqref{eq27}, we find
\begin{equation}
\label{eq36}
\begin{split}
\langle A(\Phi),\Phi\rangle_{\mathcal{H}}=\langle \partial_t\Phi,\Phi\rangle_{\mathcal{H}}=\left(\frac{1}{2}\|\Phi\|^2_{\mathcal{H}}\right)'=E'(t)<0.
\end{split}
\end{equation}
Thus, $A$ is dissipative thanks to Remark \ref{rmk1}.  On the other hand, we can check that the adjoint operator of $A$ is defined by
\begin{equation}
\label{eq31}
A^*\Psi=\left(
\begin{array}{c}
\partial_x^3 v-a_0\partial_x^5 v+a_1\partial_x v+ (-1)^k \displaystyle \int_0^\infty g(s)\partial_{x}^{2k}\zeta^t(x,s)ds\\
 -v+\dfrac{g'(s)}{g(s)}\zeta^t+\partial_s\zeta^t
\end{array}\right)
\end{equation}
with domain
\begin{equation}
\label{eq32}D(A^*)=\{\Psi\in \mathcal{H}; A^*(\Psi)\in\mathcal{H}, v\in H^2_0(I), \partial_x^2 v(0)=0, \zeta^t(x,0)=0 \}.
\end{equation}
The same line of thought may be applied to obtain
\begin{equation}
\label{eq41}
\langle A^*(\Psi),\Psi\rangle_{\mathcal{H}}= -a_0\dfrac{(\partial_x^2v(L))^2}{2}+\dfrac{1}{2} \int_0^\infty g'(s)\|\partial_x^{k}\zeta^t\|^2ds\leq 0, \forall \Psi\in D(A^*)
\end{equation}
and hence $A^*$ is also dissipative.  Now, since $A$ is densely defined and closed,  the assertion in (i) is a direct consequence of the semigroup theory of linear operator, for details see  \cite{Pazy}.

From now on we will show (ii).  Let $\Phi_0=(u_0,\eta^0)\in \mathcal{H}$.  As we know that $S(t)$ is a $C_0$- semigroup of contractions we have
\begin{equation}
\label{eq42}
\|S(t)\Phi_0\|_{\mathcal{H}}=\|(u,\eta^t)\|_{\mathcal{H}}\leq \|\Phi_0\|_{\mathcal{H}}=\|(u_0,\eta^0)\|_{\mathcal{H}}, \quad \forall t\in [0,T].
\end{equation}
Next, consider the function $p(x,t)$, to be chosen later, and consider a regular solution $\Phi=(u,\eta^t)$ of \eqref{eq27} with initial data $\Phi_0\in D(A).$ In this case, $\Phi$ has the regularity \eqref{eq33}.  Then, multiplying the equation \eqref{eq26} by $2xu$,  integrating by parts over $[0,T]\times I$ and using the boundary condition we have
\begin{equation}\label{eq66}
\begin{split}
&4\int_0^T\|\partial_x u\|^2dt+5a_0\int_0^T\|\partial_x^2 u\|^2dt\\
&=\int_0^L xu_0^2dx-\int_0^Lxu^2(x,T)dx-(-1)^k\int_0^T\int_0^L2 x u\int_0^\infty g(s)\partial_x^{2k}\eta^t(x,s)dsdxdt +a_1\int_0^T\|u\|^2 dt\\
& \leq L\|u_0\|^2+a_1\int_0^T\|u\|^2 dt-(-1)^k\int_0^T\int_0^L2 x u\int_0^\infty g(s)\partial_x^{2k}\eta^t(x,s)dsdxdt .
\end{split}
\end{equation}

Let us treat the case $k = 0$ and $k \in \{1,2\}$ separately.

\vspace{0.1cm}

\noindent\textbf{Case 1:} $k=0$.

\vspace{0.1cm}

First, note that
\begin{equation}
\label{eq67}
\begin{split}
-\int_0^T\int_0^L\int_0^\infty 2g(s)xu\eta^t(x,s)dsdxdt\leq& L^2 \int_0^T\int_0^\infty g(s)\|u\|^2dsdt+\int_0^T\|\eta^t(\cdot,s)\|^2_{L_g}dt\\
=& L^2 g_0\int_0^T\|u\|^2dt+\int_0^T\|\eta^t(\cdot,s)\|^2_{L_g}dt.
\end{split}
\end{equation}
Thus,  amalgamating \eqref{eq66} and \eqref{eq67}, we deduce that
\begin{equation}
\label{eq71}
\int_0^T\left(\|u\|^2+\|\partial_x u\|^2+\|\partial_x^2 u\|^2\right)dt\leq C\|(u_0,\eta^0)\|^2_{\mathcal{H}},
\end{equation}
where $C=C(T,L,a_0,a_1,g_0)>0$, showing \eqref{eq72} for $\Phi\in D(A)$.  Finally, the result for $\Phi\in \mathcal{H}$ follows by a density argument.  This, together with \eqref{eq42} implies the continuity of $ \Delta$.

\vspace{0.1cm}

\noindent\textbf{Case 2:} $k=1$.

\vspace{0.1cm}

Now, considering $k=1$,  integrating the last term of \eqref{eq66} by parts  and using H\"older’s inequality and Young’s inequality we get
\begin{equation}
\label{eq66x1}
\begin{split}
(4-\epsilon g_0 L^2)\int_0^T\|\partial_x u\|^2dt+5a_0\int_0^T\|\partial_x^2 u\|^2dt +2\int_0^T\int_0^La(x)xu^2 dxdt\\
\leq L\|u_0\|^2+(a_1+g_0)\int_0^T\|u\|^2 dt+\left(1+\dfrac{1}{\epsilon}\right)\int_0^T\|\eta^t\|_{L_g}^2dt .
\end{split}
\end{equation}
Taking $\epsilon=\dfrac{3}{g_0 L^2}>0$,  we have
\begin{equation}
\label{eq71x}
 \int_0^T\left(\|u\|^2+\|\partial_x u\|^2+\|\partial_x^2 u\|^2\right)dt\leq C \|(u_0,\eta^0)\|^2_{\mathcal{H}},
 \end{equation}
 where $C=C(T,L,a_0,a_1,g_0)>0$ showing \eqref{eq72} for $\Phi\in D(A)$. By density argument,  \eqref{eq72} holds true for $\Phi\in \mathcal{H}$ and also the continuity of the mapping $\Delta$ is verified.

\noindent\textbf{Case 3:} $k=2$.

\vspace{0.1cm}

One has merely to argue as in the previous case. The only difference is that we need to handle the term involving $\|\partial_x^2 u\|^2$ in addition to $\|\partial_x u\|^2$.
\end{proof}

\subsection{Well-posedness: The equation with source term}
The goal of this part is to deal with the well-posedness of the system \eqref{eq26} with a source term  $\varphi(x,t)$
\begin{equation}
\label{eq76}
\begin{cases}
\partial_t u+\partial_x^3 u-a_0\partial_x^5 u+a_1 \partial_x u+(-1)^k\int_0^\infty g(s)\partial_x^{2k}\eta^t(x,s)ds=\varphi(x,t), &\ (x,t)\in I\times (0,\infty),\\
\partial_t\eta^t(x,s)+\partial_s\eta^t(x,s)-u(x,t)=0,& \ x\in I,\ s,t\in\R_+,\\
\eta^t(0,s)=\eta^t(L,s)=\eta^t(x,0)=0,& x\in I, \ s,t\in\R_+,\\
u(0,t)=u(L,t)=\partial_x u(0,t)=\partial_x u(L,t)=\partial_x^2 u(L,t)=0, & t>0,\\
u(x,0)=u_0(x),&x\in I.
\end{cases}
\end{equation}
We have the following result:
\begin{theorem}
\label{teo2}
Let us consider $T>0$ and $a_0>0$.  If \eqref{eq2} is verified, then  we have:
\begin{enumerate}
\item[$(i)$] If $\Phi_0=(u_0, \eta^0)^T\in \mathcal{H}$ and $\varphi\in L^1(0,T;L^2(I)),$ then there exists a unique mild solution $\Phi=(u,\eta^t)^T$ of \eqref{eq76} such that $\Phi\in \mathcal{B}\times C([0,T];L_g),$
\begin{equation}
\label{eq77}\|(u,\eta^t)\|_{C([0,T];\mathcal{H})}^2\leq C_0\left(\|(u_0,\eta^0)\|_{\mathcal{H}}^2+\|\varphi\|_{L^1(0,T;L^2(I))}^2\right)
\end{equation}
and
\begin{equation}
\label{eq78} \|u\|_{\mathcal{B}}^2\leq C_1\left(\|(u_0,\eta^0)\|_{\mathcal{H}}^2+\|\varphi\|_{L^1(0,T;L^2(I))}^2\right),
\end{equation}
for some positive constants $C_0,\ C_1$ independent of $\Phi_0$ and $\varphi.$
\item[$(ii)$] Given $u\in L^2(0,T;H^2(I)),$ we have $u\partial_x u\in L^1(0,T;L^2(I))$ and the map
\begin{equation}
\label{eq79} \Theta: u\in L^2(0,T;H^2(I))\rightarrow u\partial_x u\in L^1(0,T;L^2(I))
\end{equation}
is continuous.
\end{enumerate}
\end{theorem}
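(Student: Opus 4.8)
The plan is to build on the linear theory of Theorem~\ref{teo1}, treating \eqref{eq76} by Duhamel's formula, and to obtain the gain of regularity \eqref{eq78} by redoing the multiplier computation of Theorem~\ref{teo1}$(ii)$ in the presence of the source term; the two nonlinear estimates of part $(ii)$ are then elementary Sobolev bookkeeping. For part $(i)$, since $A$ generates the $C_0$-semigroup of contractions $S(t)$ on $\mathcal H$ and $t\mapsto(\varphi(t),0)^T$ belongs to $L^1(0,T;\mathcal H)$, the standard inhomogeneous semigroup theory (see \cite{Pazy}) provides a unique mild solution
\[
\Phi(t)=S(t)\Phi_0+\int_0^t S(t-\tau)\bigl(\varphi(\tau),0\bigr)^T\,d\tau\in C([0,T];\mathcal H),
\]
and uniqueness is immediate because the difference of two solutions solves the homogeneous problem \eqref{eq27} with vanishing data. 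The contraction property gives $\|\Phi(t)\|_{\mathcal H}\le\|\Phi_0\|_{\mathcal H}+\|\varphi\|_{L^1(0,T;L^2(I))}$ for all $t\in[0,T]$; squaring this inequality yields \eqref{eq77}, and in particular $\eta^t\in C([0,T];L_g)$ and $u\in C([0,T];L^2(I))$ with the corresponding bound, which is the $C([0,T];L^2(I))$ part of the norm $\|\cdot\|_{\mathcal B}$.

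To reach \eqref{eq78} I would first take $\Phi_0\in D(A)$ and $\varphi\in C^1([0,T];L^2(I))$, so that $\Phi$ is a classical solution with regularity \eqref{eq33}, and multiply the first equation of \eqref{eq76} by $2xu$, integrating by parts over $[0,T]\times I$. This reproduces the identity \eqref{eq66}, the only new contribution being the term $\int_0^T\int_0^L 2xu\,\varphi\,dx\,dt$ on the right-hand side. The memory coupling is controlled exactly as in the three cases $k=0,1,2$ of the proof of Theorem~\ref{teo1}$(ii)$ (integrating by parts $k$ times in $x$, then using Hölder's and Young's inequalities and absorbing a small multiple of $\int_0^T\|\partial_x^j u\|^2\,dt$ into the left-hand side), which leaves a bound involving only $\|u_0\|^2$, $\int_0^T\|u\|^2\,dt$ and $\int_0^T\|\eta^t\|_{L_g}^2\,dt$, while the source term is estimated by $2L\|u\|_{C([0,T];L^2(I))}\|\varphi\|_{L^1(0,T;L^2(I))}$ and split by Young's inequality. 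Invoking \eqref{eq77} to dominate $\|u\|_{C([0,T];L^2(I))}^2$ and using $\int_0^T\|\eta^t\|_{L_g}^2\,dt\le T\,\|\Phi\|_{C([0,T];\mathcal H)}^2$, one obtains $\int_0^T(\|u\|^2+\|\partial_xu\|^2+\|\partial_x^2u\|^2)\,dt\le C_1(\|\Phi_0\|_{\mathcal H}^2+\|\varphi\|_{L^1(0,T;L^2(I))}^2)$; combined with the $C([0,T];L^2(I))$ bound this is \eqref{eq78} for regular data, and a density argument (density of $D(A)$ in $\mathcal H$ and of $C^1([0,T];L^2(I))$ in $L^1(0,T;L^2(I))$, together with \eqref{eq77}) extends it to general $\Phi_0\in\mathcal H$ and $\varphi\in L^1(0,T;L^2(I))$, showing in particular $u\in\mathcal B$ and hence $\Phi\in\mathcal B\times C([0,T];L_g)$.

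For part $(ii)$, given $u\in L^2(0,T;H^2(I))$ the embedding $H^1(I)\hookrightarrow L^\infty(I)$ gives, for a.e.\ $t$,
\[
\|u\partial_xu(t)\|\le\|u(t)\|_{L^\infty(I)}\,\|\partial_xu(t)\|\le M_S^{1/2}\|u(t)\|_{H^1(I)}\|u(t)\|_{H^2(I)}\le M_S^{1/2}\|u(t)\|_{H^2(I)}^2,
\]
whence $\|u\partial_xu\|_{L^1(0,T;L^2(I))}\le M_S^{1/2}\|u\|_{L^2(0,T;H^2(I))}^2<\infty$. Writing $u\partial_xu-v\partial_xv=u\partial_x(u-v)+(u-v)\partial_xv$ and running the same estimate with a Cauchy--Schwarz inequality in $t$ yields $\|u\partial_xu-v\partial_xv\|_{L^1(0,T;L^2(I))}\le M_S^{1/2}\bigl(\|u\|_{L^2(0,T;H^2(I))}+\|v\|_{L^2(0,T;H^2(I))}\bigr)\|u-v\|_{L^2(0,T;H^2(I))}$, so $\Theta$ is Lipschitz on bounded sets and in particular continuous.

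The main obstacle is the multiplier identity behind \eqref{eq78}: one must track carefully the boundary terms generated by the fifth-order operator and, for $k\in\{1,2\}$, the extra spatial integrations by parts in the memory term, so that the quantities $\int_0^T\|\partial_x^j u\|^2\,dt$ appearing on both sides can be absorbed with a suitable choice of the Young parameter. Since this is precisely the computation already performed in Theorem~\ref{teo1}$(ii)$, now with the single additional term $\int_0^T\int_0^L 2xu\,\varphi\,dx\,dt$, which is harmless, this step is more bookkeeping than a genuine difficulty, and everything else reduces to the semigroup framework and elementary Sobolev embeddings.
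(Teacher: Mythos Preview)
Your proposal is correct and follows essentially the same route as the paper: for part~$(i)$ both use the contraction semigroup to get \eqref{eq77} via Duhamel/variation-of-constants and then reproduce the $2xu$ multiplier computation of Theorem~\ref{teo1}$(ii)$ (with the harmless extra source term) followed by a density argument; for part~$(ii)$ both use a Sobolev embedding into $L^\infty(I)$ together with the splitting $u\partial_xu-v\partial_xv=u\partial_x(u-v)+(u-v)\partial_xv$ and Cauchy--Schwarz in time. The only cosmetic difference is that you invoke the $H^1(I)\hookrightarrow L^\infty(I)$ embedding with constant $M_S^{1/2}$, whereas the paper uses the $H^2(I)\hookrightarrow L^\infty(I)$ constant $K$.
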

 \begin{proof}
(i)  Since $A$ generates a $C_0$-semigroup  of contractions $S(t)$ and $\varphi\in L^1(0,T;L^2(I))$ and in order to ensure the validity of the computations, we shall work with a regular solution  $\Phi$ of \eqref{eq76} stemmed from an initial data $\Phi_0=(u_0,\eta^0)^T\in D(A).$  It is well-known from the semigroups theory \cite{Pazy}  that the solution of \eqref{eq76} satisfies
\begin{equation}
\label{eq83}
\|(u,\eta^t)\|_{\mathcal{H}}\leq C\left(\|(u_0,\eta^0)\|_{\mathcal{H}}+\int_0^t\|\varphi\|dt\right)\leq C\left(\|(u_0,\eta^0)\|_{\mathcal{H}}+\|\varphi\|_{L^1(0,T;L^2(I))}\right),
\end{equation}
and consequently \eqref{eq77} holds.  We also have, thanks to \eqref{eq83}, that
\begin{equation}
\label{eq84a}
\|u\|_{C([0,T];L^2(I))}\leq C\left(\|(u_0,\eta^0)\|_{\mathcal{H}}+\|\varphi\|_{L^1(0,T;L^2(I))}\right).
\end{equation}
Therefore, to obtain the $H^2$-norm of the solution, that is, \eqref{eq78}, we use an analogous process as in the proof of \eqref{eq72}, and hence we will omit it.  On the other hand, a density argument allows us to extend the results to any initial condition $\Phi_0\in \mathcal{H}$.

(ii) First, consider $y,z\in L^2(0,T;H^2(I))$. We have
 \begin{equation}
 \label{eq97}
\begin{split}
 \|y\partial_x y\|_{L^1(0,T;L^2(I))}\leq K\int_0^T\|y\|_{H^2(I)}\|\partial_x y\|dt\leq K\int_0^T\|y\|_{H^2(I)}^2dt=K\|y\|_{L^2(0,T;H^2(I))}^2,
 \end{split}
 \end{equation}
where $K$ is the positive constant of the Sobolev embedding  $H^2\hookrightarrow L^\infty(I)$. Thus,  $$y\partial_x y\in L^1(0,T;L^2(I)),$$ for each $y\in L^2(0,T;H^2(I)).$

In turn, using triangle inequality together with Cauchy–Schwarz inequality, we get
  \begin{equation}  \label{eq98}
\begin{split}
 \|\Theta(y)-\Theta(z)\|_{L^1(0,T;L^2(I))} \leq& K\int_0^T\|y-z\|_{H^2(I)}\| y\|_{H^2(I)}dt +K\int_0^T\|z\|_{H^2(I)}\| y- z\|_{H^2(I)}dt\\
 \leq& K\|y-z\|_{L^2(0,T;H^2(I))}\| y\|_{L^2(0,T;H^2(I))}\\&+K\|z\|_{L^2(0,T;H^2(I))}\| y- z\|_{L^2(0,T;H^2(I))}\\
 =& K\|y-z\|_{L^2(0,T;H^2(I))}\left(\| y\|_{L^2(0,T;H^2(I))} +\|z\|_{L^2(0,T;H^2(I))}\right)
 \end{split}
 \end{equation}
 Thus, the mapping $\Theta$ is continuous with respect to the corresponding topologies.
 \end{proof}

 \subsection{Well-posedness: The nonlinear problem} The next result ensures the well-posedness of the system \eqref{eq1}, which is represented by the problem  \eqref{eq14}.
 \begin{theorem}
 \label{teo3}
Let us consider $T>0$ and $a_0>0$.  If \eqref{eq2} is verified, then there exists a positive constant $C$ such that, for every  $U_0\in \mathcal{H}$ with
 \begin{equation}
 \label{eq99}
 \|U_0\|^2< \dfrac{1}{16C_1^2K^2}
 \end{equation}
where $C_1$ is as in \eqref{eq78} and $K$  is the positive constant of the Sobolev embedding  $H^2\hookrightarrow L^\infty(I)$, the problem \eqref{eq14} has a unique global solution $U$ satisfying the regularity \eqref{eq34} and consequently,  the problem \eqref{eq1} admits a unique global solution $u\in \mathcal{B}.$
 \end{theorem}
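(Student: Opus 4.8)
The plan is a standard contraction‑mapping (Banach fixed point) argument in $\mathcal{B}=C([0,T];L^2(I))\cap L^2(0,T;H^2(I))$, treating the nonlinearity $u\partial_x u$ as a source term and invoking the linear theory established in Theorems \ref{teo1} and \ref{teo2}. Concretely, for $v\in\mathcal{B}$ fixed, let $\Gamma(v):=u$ be the first component of the unique mild solution $(u,\eta^t)$ of the forced linear system \eqref{eq76} with initial datum $U_0$ and source $\varphi=-v\partial_x v$. By \eqref{eq97}, $v\partial_x v\in L^1(0,T;L^2(I))$ whenever $v\in\mathcal{B}$, so Theorem \ref{teo2}$(i)$ makes $\Gamma$ a well-defined map of $\mathcal{B}$ into itself, and combining \eqref{eq78} with \eqref{eq97} gives
\[
\|\Gamma(v)\|_{\mathcal{B}}^2\le C_1\big(\|U_0\|_{\mathcal{H}}^2+K^2\|v\|_{\mathcal{B}}^4\big).
\]

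First I would check that $\Gamma$ maps the closed ball $B_R=\{v\in\mathcal{B}:\|v\|_{\mathcal{B}}\le R\}$ into itself for the choice $R^2:=2C_1\|U_0\|_{\mathcal{H}}^2$. The displayed inequality gives $\|\Gamma(v)\|_{\mathcal{B}}^2\le C_1\|U_0\|_{\mathcal{H}}^2+C_1K^2R^4\le R^2$ as soon as $4C_1^2K^2\|U_0\|_{\mathcal{H}}^2\le1$, which holds (strictly) under \eqref{eq99}. For the contraction estimate, note that $\Gamma(v)-\Gamma(w)$ is the first component of the solution of \eqref{eq76} with \emph{zero} initial datum and source $-(v\partial_x v-w\partial_x w)$; hence, by \eqref{eq78} (with $U_0=0$) and the Lipschitz bound \eqref{eq98} on $\Theta$,
\[
\|\Gamma(v)-\Gamma(w)\|_{\mathcal{B}}^2\le C_1K^2\big(\|v\|_{\mathcal{B}}+\|w\|_{\mathcal{B}}\big)^2\|v-w\|_{\mathcal{B}}^2\le 4C_1K^2R^2\,\|v-w\|_{\mathcal{B}}^2 ,
\]
and since $R^2=2C_1\|U_0\|_{\mathcal{H}}^2<\tfrac{1}{8C_1K^2}$ by \eqref{eq99}, the constant $4C_1K^2R^2<\tfrac12$, so $\Gamma$ is a strict contraction on $B_R$. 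Banach's theorem then yields a unique $u\in B_R$ with $\Gamma(u)=u$; by construction $(u,\eta^t)$ is the unique mild solution of \eqref{eq14} on $[0,T]$, with $u\in\mathcal{B}$ and $U=(u,\eta^t)\in C([0,T];\mathcal{H})$ as in \eqref{eq34}.

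It remains to pass from $[0,T]$ to a global solution on $\R_+$. Here I would use that the energy $E(t)=\tfrac12\|U(t)\|_{\mathcal{H}}^2$ is non-increasing (Lemma \ref{Lemma_Energy} together with Remark \ref{rmk1}), so $\|U(t)\|_{\mathcal{H}}\le\|U_0\|_{\mathcal{H}}$ for as long as the solution exists. In particular $U(T)$ again satisfies \eqref{eq99}, so one restarts the fixed-point construction from $U(T)$ on $[T,2T]$, then from $U(2T)$ on $[2T,3T]$, and so on, concatenating the pieces (uniqueness on each subinterval forces them to agree on overlaps) to obtain the unique global solution with regularity \eqref{eq34}, and in particular $u\in\mathcal{B}$. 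The argument is essentially routine once Theorems \ref{teo1} and \ref{teo2} are available; the only points requiring care are the bookkeeping of the constants so that self-mapping and contraction both follow from the single smallness hypothesis \eqref{eq99}, and the use of the dissipation of $E$ to iterate the local result into a global one. I do not foresee a genuine obstacle beyond these.
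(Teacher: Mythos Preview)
Your proposal is correct and follows essentially the same strategy as the paper: define $\Gamma(v)$ as the first component of the mild solution to the forced linear problem \eqref{eq76} with source $-v\partial_x v$, combine the linear estimate \eqref{eq78} with the nonlinear bounds \eqref{eq97}--\eqref{eq98} to obtain self-mapping and contraction on a small ball in $\mathcal{B}$, apply Banach's fixed point theorem, and invoke energy dissipation for globality. The only cosmetic differences are your choice of radius $R^2=2C_1\|U_0\|_{\mathcal{H}}^2$ versus the paper's $r=\sqrt{\alpha}/(2K)$, and your more explicit restart-and-concatenate argument for global existence where the paper simply appeals to the decreasing energy in one sentence; your constant bookkeeping is in fact slightly cleaner (in particular you correctly retain the factor $C_1$ in the contraction estimate, which the paper's \eqref{eq103} appears to drop).
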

  \begin{proof} First, consider $U_0=(u_0,\eta^0)\in \mathcal{H}$ satisfying \eqref{eq99}. Next, define the map $\Gamma:\mathcal{B}\rightarrow \mathcal{B}$ by $\Gamma(z)=u$, where $u$ is a solution of \eqref{eq76} with source term  $\varphi(x,t)=-z(x,t)\partial_x z(x,t)$ and initial data $U_0.$

\vspace{0.1cm}

\noindent\textbf{Claim 1:} $\Gamma$ is well-defined.

\vspace{0.1cm}

In fact, take $\alpha >0$ such that
$$\|U_0\|_{ \mathcal{B}}^2\leq \alpha< \dfrac{1}{16C_1^2K^2}.$$
Theorem \ref{teo2} ensures that for the initial data $U_0,$ there exists a unique solution  $U=(u,\eta^t)$ of \eqref{eq76} satisfying, thanks to \eqref{eq99}, the estimate
\begin{equation}
\label{eq101}
  \|\Gamma(z)\|_{ \mathcal{B}}\leq C_1(\alpha+\|z\partial_x z\|_{L^1(0,T;L^2(I))}^2).
\end{equation}
Moreover, by using \eqref{eq97}, we get
\begin{equation}
\label{eq102}
  \begin{split}
  \|\Gamma(z)\|_{ \mathcal{B}}^2\leq& C_1\left(\|(u_0,\eta^0)\|_{ \mathcal{H}}^2+\|z\partial_x z\|_{L^1(0,T;L^2(I))}^2\right)  \leq C_1(\alpha+K^2\|z\|_{L^2(0,T;H^2(I))}^4)\\
  \leq& C_1(\alpha+K^2\|z\|_{ \mathcal{B}}^4),
  \end{split}
\end{equation}
for all $z \in  \mathcal{B}, $ showing the claim 1.

\vspace{0.1cm}

\noindent\textbf{Claim 2:} $\Gamma$ is a contraction.

\vspace{0.1cm}

Indeed,  we have
\begin{equation}
  \begin{split}
\label{eq103}
\|\Gamma(y)-\Gamma(z)\|_{\mathcal{B}}^2\leq& 2K^2\|y-z\|_{L^2(0,T;H^2(I))}^2\left(\| y\|_{L^2(0,T;H^2(I))}^2 +\|z\|_{L^2(0,T;H^2(I))}^2\right)\\
\leq& 2K^2\|y-z\|_{\mathcal{B}}^2\left(\| y\|_{\mathcal{B}}^2 +\|z\|_{\mathcal{B}}^2\right).
  \end{split}
\end{equation}
Then, consider the restriction of $\Gamma$ to the closed ball
$B=\left\{z\in \mathcal{B}; \|z\|_{\mathcal{B}}^2\leq r\right\},$
with  $r=\dfrac{\sqrt{\alpha}}{2K}$.  Thus, \eqref{eq101} and \eqref{eq102} yields that
\begin{equation}
\label{eq104}
  \|\Gamma(z)\|_{ \mathcal{B}}^2\leq  C_1(\alpha+K^2\|z\|_{ \mathcal{B}}^4)\leq C_1(\alpha+K^2r^2)<2C_1\alpha <r
\end{equation}
and
\begin{equation}
\label{eq105}
\|\Gamma(y)-\Gamma(z)\|_{\mathcal{B}}^2\leq 4rK^2\|y-z\|_{\mathcal{B}}^2\leq \dfrac{1}{2}\|y-z\|_{\mathcal{B}}^2.
\end{equation}
Clearly, the mapping $\Gamma$ is well-defined and contractive on the ball $B$ according to the choice \eqref{eq99}, showing the claim 2.

Therefore,  using the Banach Fixed Point Theorem, we deduce that $\Gamma$ has a unique fixed element $u\in B$,  which turns out to be the unique solution to our problem \eqref{eq1}.  Lastly, the system \eqref{eq1} being dissipative as its energy is decreasing, the solution is global.
\end{proof}

\section{Proof of the main result}\label{Sec4}
This section is devoted to the proof of the main result, namely, Theorem \ref{teo4}. The main ingredient of the proof is the utilization of the energy method.
\begin{proof}[Proof of Theorem \ref{teo4}]
First,  multiplying \eqref{eq8} by $xu$,  integrating by parts several times, observing that $\partial_x(xu)=u+x\partial_xu,$ and thanks to the boundary conditions of \eqref{eq1}, we have that
\begin{equation}
\label{eq116}
\begin{split}
\dfrac{5a_0}{2}\|\partial_x^2 u\|^2 =&-\partial_t\left(\dfrac{1}{2}\int_0^Lxu^2 dx\right)-\dfrac{3}{2}\|\partial_x u\|^2+\dfrac{1}{3}\int_0^Lu^3 dx+\dfrac{a_1}{2}\|u\|^2\\
& -(-1)^k\int_0^Lxu\int_0^\infty g(s)\partial_x^{2k} \eta^t(x,s)ds dx.
\end{split}
\end{equation}
We are now in position to estimate the terms of the right hand side of \eqref{eq116}.

\vspace{0.1cm}

\noindent{\textbf{Estimate 1:}} First, the Sobolev embedding yields that
$$
\left|\int_0^Lu^3dx\right|\leq \|u\|_{L^\infty(I)}^2\int_0^L|u|dx\leq M_S\|u\|_{H_1(I)}^2\int_0^L|u|dx.
$$
Thus, the previous inequality together with H\"older's and Poincar\'e's inequalities give us
 \begin{equation}
\label{eq118}
\begin{split}
\left|\int_0^Lu^3dx\right|\leq & M_S\|u\|_{H_1(I)}^2\left(\int_0^L1^2 dx\right)^\frac{1}{2}\left(\int_0^L|u|^2 dx\right)^\frac{1}{2}\\
\leq &M_SL^\frac{1}{2}\left(\|u\|^2+\|\partial_x u\|^2\right)\|u\|\\
\leq &M_SL^\frac{1}{2}\left(M_P+1\right)\|\partial_x u\|^2(2E(t))^\frac{1}{2}\\
\leq&M_SL^\frac{1}{2}\left(M_P+1\right)(2E(0))^\frac{1}{2}\|\partial_x u\|^2.
\end{split}
\end{equation}

\noindent{\textbf{Estimate 2:}} We claim that for each $\epsilon>0$, there exists a constant $C_\epsilon>0$ such that
$$ \left|-(-1)^k\int_0^Lxu\int_0^\infty g(s)\partial_x^{2k} \eta^t(x,s)ds dx\right|\leq \epsilon\|\partial_x^2 u\|^2+C_\epsilon\int_0^\infty g(s)\|\partial_x^{k}\eta^t \|^2ds.$$
We split this estimates in two parts, namely,  $k=0$ and $k \in \{1,2\}.$

Indeed, for the case $k=0,$ using the Young's and Poincar\'e's inequalities we have
\begin{equation}
\label{eq117}
\begin{split}
\left|\int_0^Lxu\int_0^\infty g(s)\eta^t dsdx\right|\leq&\int_0^\infty g(s)\int_0^L \left|xu\right|\left|\eta^t \right|dxds\\
\leq &L\int_0^\infty g(s)\int_0^L \left(\delta\left|u\right|^2+\dfrac{1}{4\delta}\left|\eta^t \right|^2\right)dxds\\
\leq& L\delta\underbrace{\int_0^\infty g(s)ds}_{=g_0}\|u\|^2+L\dfrac{1}{4\delta}\int_0^\infty g(s)\|\eta^t \|^2ds\\
\leq&\underbrace{L M_P^2g_0\delta}_{\epsilon}\|\partial_x^2 u\|^2+\underbrace{L\dfrac{1}{4\delta}}_{C_\epsilon}\int_0^\infty g(s)\|\eta^t \|^2ds\\
\leq&\epsilon\|\partial_x^2 u\|^2+C_\epsilon\int_0^\infty g(s)\|\eta^t \|^2ds,
\end{split}
\end{equation}
where $\delta=\dfrac{\epsilon}{Lc_p^2g_0}>0,$ showing the estimate 2.

Now, we turn to the  case  $k=1$.  Applying once again Young's and Poincar\'e's inequalities gives the following
\begin{equation}
\label{eq67x2x}
\begin{split}
\left| -\int_0^\infty g(s)\int_0^L u\partial_x\eta^t(x,s)dxds\right|\leq& \int_0^\infty\int_0^L \left|g(s)u\partial_x\eta^t(x,s)\right|dxds\\
\leq& \delta\int_0^\infty g(s)\|u\|^2ds+\dfrac{1}{4\delta}\int_0^\infty g(s)\|\partial_x\eta^t\|^2ds\\
\leq&  \delta\int_0^\infty g(s)\|u\|^2ds+\dfrac{1}{4\delta}\|\eta^t\|^2_{L_g}\\
\leq&  \delta g_0M_P^2\|\partial_x^2 u\|^2+\dfrac{1}{4\delta}\|\eta^t\|^2_{L_g}.
\end{split}
\end{equation}
In a similar way, we also have the following estimate
\begin{equation}
\label{eq67x3x}
\begin{split}
\left| -\int_0^\infty g(s)\int_0^L x\partial_x u\partial_x\eta^t(x,s)dxds\right|\leq& \delta\int_0^\infty g(s)\|x\partial_xu\|^2ds +\dfrac{1}{4\delta}\int_0^\infty g(s)\|\partial_x\eta^t\|^2ds\\
\leq& \delta L^2\int_0^\infty g(s)\|\partial_xu\|^2ds  +\dfrac{1}{4\delta}\|\eta^t\|^2_{L_g}\\
=&  \delta g_0 L^2\|\partial_x u\|^2+\dfrac{1}{4\delta}\|\eta^t\|^2_{L_g}\\
\leq & \delta g_0 M_PL^2\|\partial_x^2 u\|^2+\dfrac{1}{4\delta}\|\eta^t\|^2_{L_g}.
\end{split}
\end{equation}
Now, observe that
\begin{equation*}
\begin{split}
\left|(-1)^k\int_0^Lxu\int_0^\infty g(s)\partial_x^2\eta^t dsdx\right|\leq&\left| -\int_0^\infty g(s)\int_0^L u\partial_x\eta^t(x,s)dxds\right|\\&+\left| -\int_0^\infty g(s)\int_0^L x\partial_x u\partial_x\eta^t(x,s)dxds\right|.
\end{split}
\end{equation*}
Thus, thanks to the inequalities \eqref{eq67x2x} and \eqref{eq67x3x} applied on the right-hand side of the previous inequality we have
\begin{equation*}
\begin{split}
 \left|(-1)^k\int_0^Lxu\int_0^\infty g(s)\partial_x^2\eta^t dsdx\right|\leq&\delta g_0M_P^2\|\partial_x^2 u\|^2+\dfrac{1}{4\delta}\|\eta^t\|^2_{L_g}+ \delta g_0 M_PL^2\|\partial_x^2 u\|^2+\dfrac{1}{4\delta}\|\eta^t\|^2_{L_g}\\
 =& \underbrace{\delta g_0(M_P+M_P^2)}_{\epsilon}\|\partial_x^2 u\|^2+\underbrace{\dfrac{1}{2\delta}}_{C_\epsilon}\|\eta^t\|^2_{L_g},
\end{split}
\end{equation*}
where $\delta=\dfrac{\epsilon}{g_0(M_P+M_P^2)}>0,$ showing the estimate 2.

Now, we turn to the  case  $k=2$. Thanks to  Young's and Poincar\'e's inequalities, we have
\begin{equation}
\label{eq67x2xa}
\begin{split}
\left| 2\int_0^\infty g(s)\int_0^L \partial_x u\partial_x^2\eta^t(x,s)dxds\right|\leq& \delta g_0M_P\|\partial_x^2 u\|^2+\dfrac{1}{\delta}\|\eta^t\|^2_{L_g}.
\end{split}
\end{equation}
Using the same arguments as for the case $k=1$, we have
\begin{equation}
\label{eq67x3xa}
\begin{split}
\left| -\int_0^\infty g(s)\int_0^L x\partial_x^2 u\partial_x^2\eta^t(x,s)dxds\right|\leq& \delta g_0 L^2\|\partial_x^2 u\|^2+\dfrac{1}{\delta}\|\eta^t\|^2_{L_g}.
\end{split}
\end{equation}
Moreover, since
\begin{equation*}
\begin{split}
\left|-(-1)^k\int_0^Lxu\int_0^\infty g(s)\partial_x^4\eta^t dsdx\right|\leq&\left| 2\int_0^\infty g(s)\int_0^L \partial_x u\partial_x^2\eta^t(x,s)dxds\right|\\&+\left| \int_0^\infty g(s)\int_0^L x\partial_x^2 u\partial_x^2\eta^t(x,s)dxds\right|,
\end{split}
\end{equation*}
and thanks to \eqref{eq67x2xa} and \eqref{eq67x3xa}, we reach
\begin{equation*}
\begin{split}
 \left|-(-1)^k\int_0^Lxu\int_0^\infty g(s)\partial_x^4\eta^t dsdx\right|\leq&\delta g_0M_P\|\partial_x^2 u\|^2+ \delta g_0 L^2\|\partial_x^2 u\|^2+\dfrac{2}{\delta}\|\eta^t\|^2_{L_g}\\
 =& \underbrace{\delta g_0(M_P+L^2)}_{\epsilon}\|\partial_x^2 u\|^2+\underbrace{\dfrac{2}{\delta}}_{C_\epsilon}\|\eta^t\|^2_{L_g},
\end{split}
\end{equation*}
where $\delta=\dfrac{\epsilon}{g_0(M_P+L^2)}>0,$ showing the estimate 2.

\vspace{0.1cm}

\noindent{\textbf{Estimate 3:}} There are two constants $C_1>0$ and $C_2>0$ such that \begin{equation}
\label{eq121}
\begin{array}{rcl}
{\displaystyle\|\partial_x^2 u\|^2}&\leq&{\displaystyle -C_1\partial_t\left(\int_0^Lxu^2 dx\right) +C_2\int_0^\infty g(s)\|\partial_x^{k}\eta^t\|^2ds.}
\end{array}
\end{equation}

Indeed,  estimates 1 and 2 together with the Poincar\'e inequality yield
\begin{equation*}
\begin{array}{rcl}
{\displaystyle\dfrac{5a_0}{2}\|\partial_x^2 u\|^2}&\leq&{\displaystyle -\partial_t\left(\dfrac{1}{2}\int_0^Lxu^2 dx\right)+\dfrac{1}{3}\left|\int_0^Lu^3 dx\right|}\\
&&\\
&&{\displaystyle +\dfrac{a_1}{2}\|u\|^2+\left|-(-1)^k\int_0^Lxu\int_0^\infty g(s)\partial_x^{2k}\eta^t(x,s)ds dx\right|}\\
&&\\
&\leq&{\displaystyle -\partial_t\left(\dfrac{1}{2}\int_0^Lxu^2 dx\right)+\dfrac{1}{3}M_SL^\frac{1}{2}(M_P+1)(E(0))^\frac{1}{2}\|\partial_x u\|^2}\\
&&\\
&&{\displaystyle +\dfrac{a_1}{2}\|u\|^2+\epsilon\|\partial_x^2 u\|^2+C_\epsilon\int_0^\infty g(s)\|\partial_x^{k}\eta^t\|^2ds}\\
&&\\
&\leq&{\displaystyle -\partial_t\left(\dfrac{1}{2}\int_0^Lxu^2 dx\right)+\dfrac{1}{3}M_SL^\frac{1}{2}M_P(M_P+1)(E(0))^\frac{1}{2}\|\partial_x^2 u\|^2}\\
&&\\
&&{\displaystyle +\dfrac{a_1M_P^2}{2}\|\partial_x^2u\|^2+\epsilon\|\partial_x^2 u\|^2+C_\epsilon\int_0^\infty g(s)\|\partial_x^{k}\eta^t\|^2ds}
\end{array}
\end{equation*}
which ensures
\begin{equation}
\label{eq120}
\begin{split}
\underbrace{\left(5a_0-\dfrac{2}{3}M_SL^\frac{1}{2}M_P(M_P+1)(E(0))^\frac{1}{2}-a_1M_P^2-2\epsilon\right)}_{:=D}\|\partial_x^2 u\|^2\leq& -\partial_t\left(\int_0^Lxu^2 dx\right)\\
&+2C_\epsilon\int_0^\infty g(s)\|\partial_x^{k}\eta^t\|^2ds.
\end{split}
\end{equation}
Now on, taking $\epsilon>0$ small enough, it follows from \eqref{eq140} that $D>0$ and hence \eqref{eq121} holds true for $C_1=\dfrac{1}{D}>0$ and $C_2=\dfrac{2C_\epsilon}{D}>0.$ Thus, the estimate 3 is achieved.

\vspace{0.1cm}

To conclude the proof, consider the following function  $$ F(t)=\mu E(t)+C_1\xi(t)\left(\int_0^L xu^2dx\right),$$
where $\mu=2\left(C_2+\dfrac{1}{M_P^2}\right)$.  As $\xi'\leq 0$, we have
$$
 0\leq \xi(t)\left(\int_0^L xu^2dx\right)\leq \xi(0)\left(\int_0^L xu^2dx\right)\leq\xi(0)L\|u\|^2\leq 2L\xi(0)2E(t).$$
Consequently, owing to the previous inequality, we get
\begin{equation}
\label{eq123}
\begin{split}
 \mu E(t)\leq& F(t)\leq \mu E(t)+C_1\xi(t)\left(\int_0^L xu^2dx\right)\\
 \leq& \mu E(t)+2LC_1\xi(0)E(t)=\left(\mu+2LC_1\xi(0)\right)E(t).
\end{split}
\end{equation}
Observe that  $\xi'\leq 0$ ensures that
\begin{equation}
\label{eq124}
\begin{split}
F'(t)=& \mu E'(t)+C_1\xi'(t)\left(\int_0^L xu^2dx\right)+C_1\xi(t)\partial_t\left(\int_0^L xu^2dx\right)\\
\leq& \mu E'(t)+C_1\xi(t)\partial_t\left(\int_0^L xu^2dx\right).
\end{split}
\end{equation}
Now,  putting \eqref{eq121} into \eqref{eq124} and using the Poincar\'e's inequality, we get

\begin{equation}
\label{eq125}
\begin{split}
F'(t)\leq &\mu E'(t)+\xi(t)\left(C_2\int_0^\infty g(s)\|\partial_x^{k}\eta^t\|^2ds-\|\partial_x^2 u\|^2\right)\\
\leq &\mu E'(t)+\xi(t)\left(C_2\int_0^\infty g(s)\|\partial_x^{k}\eta^t\|^2ds-\dfrac{1}{M_P^2}\| u\|^2\right)\\
=& \mu E'(t)+\xi(t)\left(C_2+\dfrac{1}{M_P^2}\right)\int_0^\infty g(s)\|\partial_x^{k}\eta^t\|^2ds-\dfrac{2}{M_P^2}\xi(t)E(t)\\
\leq& \mu E'(t)+\xi(t)\left(C_2+\dfrac{1}{M_P^2}\right)\int_0^\infty g(s)\|\partial_x^{k}\eta^t\|^2ds-\lambda_0\xi(t)F(t),
\end{split}
\end{equation}
where in the last inequality we have used \eqref{eq123}.  Here,
$$\lambda_0=\dfrac{2}{M_P^2\left[\mu+2LC_1\xi(0)\right]}.$$
Now on,  we shall distinguish two cases.

\vspace{0.1cm}

\noindent{\textbf{Case 1:}} $\xi$ is a constant function.

\vspace{0.1cm}

In this case,  taking into account \eqref{eq106} and \eqref{eq20}, we have
$$\xi\int_0^\infty g(s)\|\partial_x^{k}\eta^t\|^2ds\leq -\int_0^\infty g'(s)\|\partial_x^{k}\eta^t\|^2ds\leq-2E'(t) $$
that substituting in \eqref{eq125} gives us
$$
F'(t)\leq \mu E'(t)-2\left(C_2+\dfrac{1}{M_P^2}\right)E'(t)-\lambda_0\xi F(t)=-\lambda_0\xi F(t)$$
implying $F(t)=e^{-ct}F(0),$ with $c=\lambda_0\xi$.  Finally,  \eqref{eq123} yields
$$
E(t)\leq \dfrac{1}{\mu}F(t)=\dfrac{F(0)}{\mu}e^{-ct}\leq \dfrac{2\xi(0)E(0)}{\mu}e^{-ct},
$$
which ensures item (i) of the Theorem \ref{teo4}.

\vspace{0.1cm}

\noindent{\textbf{Case 2:}} $\xi$ is not a constant function.

\vspace{0.1cm}

First, observe that integrating \eqref{eq121} over $[0,t]$ and using the definition of $E(t)$, since $E$ is decreasing, we get
\begin{equation}
\label{eq128}
\begin{split}
\int_0^t\|\partial^2_x u\|^2ds\leq& -C_1\int_0^t\partial_{\tau}\left(\int_0^L xu^2dx\right)d\tau +C_2\int_0^t\int_0^\infty g(s)\|\partial_x^{k}\eta^t\|^2dsd\tau\\
\leq& C_1\left(\int_0^L xu_0^2dx\right)+C_2\int_0^t\int_0^\infty g(s)\|\partial_x^{k}\eta^t\|^2dsd\tau\\
\leq& C_1\left(\int_0^L xu_0^2dx\right)+C_2\int_0^t2E(\tau)d\tau\\
\leq& C_1\left(\int_0^L xu_0^2dx\right)+C_2\int_0^t2E(0)d\tau:=C_3(1+t),
\end{split}
\end{equation}
where $C_3=\max \left\{C_1\left(\displaystyle\int_0^L x u_0^2 d x\right), 2 C_2 E(0)\right\}$.  Now,  Young's and H\"older’s  inequalities together with \eqref{eq128} , ensures that
\begin{equation}
\label{eq129}
\begin{split}
\left\|\int_{t-s}^t \partial_x^{k}u(\cdot, \tau)d\tau\right\|^2\leq& 2\left\|\int_{t-s}^0 \partial_x^{k}u(\cdot, \tau)d\tau\right\|^2+2\left\|\int_{0}^t \partial_x^{k}u(\cdot, \tau)d\tau\right\|^2\\
\leq& 2\left\|\int^{t-s}_0 \partial_x^{k}u_0(\cdot, \tau)d\tau\right\|^2+2t\int_{0}^L\int_{0}^t (\partial_x^{k}u)^2(\cdot, \tau)d\tau dx\\
\leq&2\left\|\int^{t-s}_0 \partial_x^{k}u_0(\cdot, \tau)d\tau\right\|^2+2t\int_{0}^t \|\partial_x^{k}u\|^2d\tau:=c_1h(t,s),
\end{split}
\end{equation}
for $0\leq t\leq s.$  Here $c_1=2\max\{1,M_P^{2-k}C_3\}$ and ${\displaystyle h(t,s)=t^2+t+\left\|\int^{t-s}_0 \partial_x^{k}u_0(\cdot, \tau)d\tau\right\|^2.}$ On the other hand,  thanks to \eqref{eq20}
\begin{equation}
\label{eq130}
\begin{split}
 \xi(t)\int_0^\infty g(s)\|\partial_x^{k}\eta^t\|^2ds=& \xi(t)\int_0^t g(s)\|\partial_x^{k}\eta^t\|^2ds+\xi(t)\int_t^\infty g(s)\|\partial_x^{k}\eta^t\|^2ds\\
 \leq&-\int_0^t g'(s)\|\partial_x^{k}\eta^t\|^2ds+\xi(t)\int_t^\infty g(s)\|\partial_x^{k}\eta^t\|^2ds\\
 \leq& -\int_0^t g'(s)\|\partial_x^{k}\eta^t\|^2ds+c_1\xi(t)\int_t^\infty g(s)h(t,s)ds\\
 \leq& -\int_0^\infty g'(s)\|\partial_x^{k}\eta^t\|^2ds+c_1\xi(t)\int_t^\infty g(s)h(t,s)ds\\
 \leq& -2E'(t)+c_1\xi(t)\int_t^\infty g(s)h(t,s)ds.
\end{split}
\end{equation}
Recall that $$F(t)=\mu E(t)+C_1\xi(t)\left(\int_0^L xu^2dx\right).$$ Analogously to the previous case, we have
\begin{equation}
\label{eq134}
\begin{split}
F'(t)\leq& -\lambda_0\xi(t)F(t)+\mu E'(t)+\xi(t)\dfrac{\mu}{2}\int_0^\infty g(s)\|\partial_x^{k}\eta^t\|^2ds\\
\leq& -\lambda_0\xi(t)F(t)+\mu E'(t)-\mu E'(t)+c_1\dfrac{\mu}{2}\xi(t)\int_t^\infty g(s)h(t,s)ds.
\end{split}
\end{equation}
Setting $c:=\lambda_0$,  \eqref{eq134} ensures that
$$
F'(t)+c\xi(t)F(t)\leq \mu E'(t)-\mu E'(t)+\dfrac{c_1\mu}{2}\xi(t)\int_t^\infty g(s)h(t,s)ds= \dfrac{c_1\mu}{2}\xi(t)\int_t^\infty g(s)h(t,s)ds,
$$
or equivalently,
$$\left(e^{c\int_0^t\xi(\tau)d\tau}F(t)\right)'\leq\dfrac{c_1\mu}{2}e^{c\int_0^t\xi(\tau)d\tau}\xi(t)\int_t^\infty g(s)h(t,s)ds.$$
Finally,  the previous inequality and $E(t)\leq \dfrac{1}{\mu}F(t),$ gives us
\begin{equation}\label{eq139}
E(t)\leq\tilde{c} e^{-c\int_0^t\xi(\tau)d\tau}\left( 1+\int_0^te^{c\int_0^\sigma\xi(\tau)d\tau}\xi(\sigma)\int_\sigma^\infty g(s)h(\sigma,s)ds d\sigma\right),
\end{equation}
where $\tilde{c}=\dfrac{\max\left\{F(0),\dfrac{c_1\mu}{2}\right\}}{\mu}.$ Thereby, the proof of the second part (ii) of Theorem \ref{teo4} is complete.
\end{proof}

\section{Conclusion}
In this paper, we considered the well-known Kawahara equation under the presence of only an internal infinite memory term. Then, it is shown that the energy of the system decays under some assumptions of the memory kernel.  Moreover, an estimate of the energy decay is provided depending on the property of the kernel. The main ingredient of the proof is the utilization of the Fixed Point Theorem and the energy method. Based on this outcome, one can conclude that the distributed memory term creates enough dissipation for the energy of the system so that the exponential stability holds. On the other hand, we believe that our results remain valid if the memory term occurs in a boundary condition. Of course, this could be the subject of a future work to ascertain the claim.

\subsection*{Acknowledgments} Capistrano–Filho was supported by CAPES grant 88881.311964/2018-01 and 88881.520205/2020-01,  CNPq grant 307808/2021-1 and  401003/2022-1,  MATHAMSUD grant 21-MATH-03 and Propesqi (UFPE).   This work is part of the Ph.D. thesis of de Jesus at the Department of Mathematics of the Universidade Federal de Pernambuco.

\subsection*{Data availability statement}  Data sharing does not apply to the current paper as no data were generated or analyzed during this study.

\subsection*{Conflict of Interest} The authors declare that they have no known conflict of interest.

\end{document}